\newtheorem {thm}{Theorem}
\newtheorem* {thm*}{Theorem}
\newtheorem {cor}[thm]{Corollary}
\newtheorem* {cor*}{Corollary}
\newtheorem {lem}[thm]{Lemma}
\newtheorem {prop}[thm]{Proposition}
\theoremstyle{definition}
\newtheorem {defi}[thm]{Definition}
\newtheorem {rem}[thm]{Remark}
\newtheorem* {conj*}{Conjecture}
\newtheorem* {quest*}{Question}
\DeclareMathOperator{\Gal}{Gal}
\DeclareMathOperator{\Aut}{Aut}
\DeclareMathOperator{\Dens}{Dens}
\DeclareMathOperator{\Res}{Res}
\newcommand{\F}{\mathbb{F}}
\newcommand{\Q}{\mathbb{Q}}
\newcommand{\Z}{\mathbb{Z}}
\renewcommand{\F}{\mathbb{F}}
\newcommand{\Qbar}{\overline{\mathbb{Q}}}
\newcommand{\Zhat}{\widehat{\mathbb{Z}}}
\newcommand\C{\mathbb{C}}
\newcommand{\sqf}{\mathrm{sf}}
\DeclareMathOperator{\GL}{GL}
\DeclareMathOperator{\SL}{SL}
\DeclareMathOperator{\im}{Im}
\newcommand{\customlabel}[2]{%
   \protected@write \@auxout {}{\string \newlabel {#1}{{#2}{\thepage}{#2}{#1}{}} }%
   \hypertarget{#1}{}
}
\newcommand{\imGal}{\mathcal{G}}
\newcommand{\fine}{\mathrm{F}}
\newcommand{\failure}{\mathrm{C}}
\newcommand{\set}{\mathcal{W}}
\newcommand{\function}{\mathrm{w}}
\title{Reductions of points on algebraic groups, II}
\author{Peter Bruin and Antonella Perucca}
\address[]{Universiteit Leiden, Mathematisch Instituut,
Postbus 9512, 2300 RA Leiden, The Netherlands}
\email[]{P.J.Bruin@math.leidenuniv.nl}
\address[]{University of Luxembourg. 6, avenue de la Fonte, 4364 Esch-sur-Alzette, Luxembourg}
\email[]{antonella.perucca@uni.lu}
\begin{document}

\begin{abstract} Let $A$ be the product of an abelian variety and a torus over a number field $K$, and let $m$ be a positive integer. If $\alpha \in A(K)$ is a point of infinite order, we consider the set of primes $\mathfrak p$ of $K$ such that the reduction $(\alpha \bmod \mathfrak p)$ is well defined and has order coprime to $m$. This set admits a natural density, which we are able to express as a finite sum of products of $\ell$-adic integrals, where $\ell$ varies in the set of prime divisors of $m$. We deduce that the density is a rational number, whose denominator is bounded (up to powers of $m$) in a very strong sense. This extends the results of the paper \emph{Reductions of points on algebraic groups} by Davide Lombardo and the second author, where the case $m$ prime is established.
\end{abstract}

\maketitle

\section{Introduction}

This article is the continuation of the paper \emph{Reductions of points on algebraic groups} by Davide Lombardo and the second author \cite{LombardoPerucca}. We refer to this other work for the history of the problem and further references.

Let $A$ be the product of an abelian variety and a torus over a number field $K$, and let $m$ be a positive square-free integer. If $\alpha \in A(K)$ is a point of infinite order, we consider the set of primes $\mathfrak p$ of $K$ such that the reduction $(\alpha \bmod \mathfrak p)$ is well defined and has order coprime to $m$. This set admits a natural density (see Theorem \ref{thm:interpretation}), which we denote by $\Dens_m(\alpha)$.

The main question is whether we can write
\begin{equation}\label{question}
\Dens_m(\alpha)=\prod_{\ell\mid m} \Dens_\ell(\alpha)
\end{equation}
where $\ell$ varies over the prime divisors of $m$.
Let $K(A[m])$ be the $m$-torsion field of~$A$. We prove that \eqref{question} holds if $K(A[m])=K$ (i.e. if $A(K)$ contains all $m$-torsion points) or, more generally, if the degree $[K(A[\ell]):K]$ is a power of $\ell$ (see Corollary \ref{corproduct}). Indeed, \eqref{question} holds if the torsion/Kummer extensions of $\alpha$ related to different prime divisors of $m$ are linearly disjoint over $K$. In general, \eqref{question} does not hold; see Section \ref{serre} for an explicit example.

We are able to express $\Dens_m(\alpha)$ as an integral over the image of the $m$-adic representation (see Theorem \ref{madicintegral}), and also as a finite sum of products of $\ell$-adic integrals (see Theorems \ref{thmmany} and~\ref{thm:densityproduct}).
The latter decomposition allows us to prove that $\Dens_m(\alpha)$ is a rational number whose denominator is uniformly bounded in a very strong sense (see Corollary \ref{wunder}).

Finally, we study Serre curves in detail in Section \ref{serrecurves}. With our  results one can explicitly compute $\Dens_m(\alpha)$ if the $m^n$-Kummer extensions of $\alpha$ (defined in Section~\ref{arboreal}) have maximal degree for all $n$ or, more generally, if the degrees of these extensions are known and are the same with respect to the base fields $K$ and $K(A[m])$.

\section{Integration on profinite groups}

For every compact topological group $G$, we write $\mu_G$ for the normalised Haar measure on~$G$.

Let $G$ be a profinite group, and let $H$ be a closed subgroup of finite index in~$G$.
For every integrable function $f\colon H\to\C$, we define
\begin{equation}\label{int}
I_{H,f} := \int_H f d\mu_H\,.
\end{equation}

Let $\ell$ vary in a finite set of prime numbers, and suppose that we have $G=\prod_{\ell}G_\ell$, where each $G_\ell$ is
a profinite group containing a pro-$\ell$-group $G'_\ell$ as a closed subgroup of finite index. Note that we may assume that $G'_\ell$
is normal in $G_\ell$ (up to replacing $G'_\ell$ by the intersection of its finitely many conjugates in $G_\ell$).

The profinite group $G':=\prod_{\ell}G'_\ell$ has finite index in~$G$, and the profinite group
\[
H' := H \cap G'
\]
is a closed subgroup of finite index in each of $G$, $H$ and~$G'$.
Because the $G'_\ell$ are pro-$\ell$-groups for pairwise different $\ell$, every
closed subgroup of~$G'$ is similarly a product of pro-$\ell$-groups.  We
can therefore write
\begin{equation}
  H' = \prod_{\ell}H'_\ell
  \label{eq:H^1-product}
\end{equation}
where each $H'_\ell$ is a closed subgroup of finite index in~$G'_\ell$.  The normalised Haar measures on $H'$ and the $H'_\ell$ are thus
related by
\[
\mu_{H'} = \prod_{\ell}\mu_{H'_\ell}\,.
\]

For each $x\in H/H'$, we write $H(x)$ for the fibre over~$x$ of the
quotient map $H\to H/H'$.  Then each $H(x)$ is a left coset of the
subgroup $H'\subseteq H$.  We restrict the Haar measure on~$H$ to a
measure on the open subset $H(x)$, which we still denote by $\mu_H$.
The normalised Haar measure on $H(x)$ is then
\[
\mu_{H(x)} = (H:H')\mu_H\,,
\]
and we can decompose \eqref{int} as
\begin{equation}\label{integralstep}
  I_{H,f} = \sum_{x\in H/H'} \int_{H(x)} f d\mu_H= \frac{1}{(H:H')}\sum_{x\in H/H'} \int_{H(x)} f d\mu_{H(x)}.
\end{equation}

By \eqref{eq:H^1-product}, and since $H(x)$ is a torsor under $H'$, we
can write
\[
H(x) = \prod_{\ell} H_\ell(x)\,,
\]
where each $H_\ell(x)$ is a torsor under $H'_\ell$.  We equip each
$H_\ell(x)$ with the normalised Haar measure $\mu_{H_\ell(x)}$ coming from
the $H'_\ell$-torsor structure; then $\mu_{H(x)}$ is the product of the
$\mu_{H_\ell(x)}$.

\begin{thm}\label{thm:decomposition}
Suppose that over $H(x)$ there is a product decomposition
\begin{equation}\label{decomposition}
  f = \prod_{\ell} f_{x,\ell}
\end{equation}
where the $f_{x,\ell}: H_\ell(x)\to\C$ are locally constant functions that are integrable with respect to $\mu_{H_\ell(x)}$. We then have
\begin{equation}\label{formula1}
I_{H,f} = \frac{1}{(H:H')}\sum_{x\in H/H'} \prod_{\ell}
\int_{H_\ell(x)} f_{x,\ell} d\mu_{H_\ell(x)}.
\end{equation}
\end{thm}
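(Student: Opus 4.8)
The plan is to reduce to the decomposition \eqref{integralstep} that has already been established, and then to evaluate each of the inner integrals by Fubini's theorem. By \eqref{integralstep} we have
\[
I_{H,f} = \frac{1}{(H:H')}\sum_{x\in H/H'} \int_{H(x)} f\,d\mu_{H(x)},
\]
so it is enough to prove that, for every fixed coset $x\in H/H'$,
\[
\int_{H(x)} f\,d\mu_{H(x)} = \prod_{\ell} \int_{H_\ell(x)} f_{x,\ell}\,d\mu_{H_\ell(x)};
\]
substituting this identity back into \eqref{integralstep} then gives \eqref{formula1}.

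Fix such an $x$. Recall that the index set of primes $\ell$ is finite, that $H(x)$ is a closed subset of the profinite (hence compact) group $G$, so that $H(x)$ and each factor in the decomposition $H(x)=\prod_\ell H_\ell(x)$ are compact, and that — as recorded in the discussion immediately preceding the statement — the normalised Haar measure $\mu_{H(x)}$ is the product measure $\prod_\ell \mu_{H_\ell(x)}$. By the hypothesis \eqref{decomposition}, the restriction of $f$ to $H(x)$ equals $\prod_\ell f_{x,\ell}$, where $f_{x,\ell}$ depends only on the $\ell$-th coordinate. Each $f_{x,\ell}$ is locally constant on the compact space $H_\ell(x)$, hence takes only finitely many values and is a finite $\C$-linear combination of indicator functions of open-closed sets; in particular it is bounded, measurable and integrable, and so is the product $f|_{H(x)}$.

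It then remains to apply Fubini's theorem for a finite product of probability measure spaces: since $\mu_{H(x)}=\prod_\ell \mu_{H_\ell(x)}$ and $f|_{H(x)}=\prod_\ell f_{x,\ell}$ with the $\ell$-th factor a bounded measurable function of the $\ell$-th coordinate alone, iterating the two-factor Fubini identity over the finitely many primes yields
\[
\int_{H(x)} f\,d\mu_{H(x)} = \prod_{\ell} \int_{H_\ell(x)} f_{x,\ell}\,d\mu_{H_\ell(x)},
\]
as required. I do not anticipate a serious obstacle here, since the statement is essentially measure-theoretic bookkeeping; the one point I would take care over is the identification $\mu_{H(x)}=\prod_\ell\mu_{H_\ell(x)}$ together with the compatibility of the $\mu_{H_\ell(x)}$ with the torsor structures. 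This, however, is exactly what was arranged before the theorem: the $H_\ell(x)$ carry the Haar measures transported from the $H'_\ell$, the finite product $H'=\prod_\ell H'_\ell$ carries the product of the $\mu_{H'_\ell}$ by uniqueness of normalised Haar measure, and these identifications pass to the $H'$-torsor $H(x)$ by translation.
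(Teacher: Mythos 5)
Your argument is correct and is essentially the paper's own proof: both reduce to the coset decomposition \eqref{integralstep} and then use that $\mu_{H(x)}=\prod_\ell\mu_{H_\ell(x)}$ together with the factorisation $f=\prod_\ell f_{x,\ell}$ to split each inner integral (the paper leaves the Fubini step implicit, which you spell out). No gap; your additional remarks on integrability of locally constant functions on compact spaces are routine but harmless.
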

\begin{proof}
The assertion follows from \eqref{integralstep} because we have a product decomposition for $f$ and because $\mu_{H(x)}$ is the product of the
$\mu_{H_\ell(x)}$.
\end{proof}

\begin{prop}\label{prop:integral}
Suppose that we have $H=\prod_{\ell} H_\ell$, where $H_\ell\subseteq G_\ell$ (this happens for example if $H\subseteq G'$).
If there is a product decomposition
\begin{equation}
  f= \prod_{\ell} f_{\ell}
  \label{eq:f-product-all}
\end{equation}
over $H$, where the $f_{\ell}: H_\ell\to\C$ are locally constant functions that are integrable with respect to $\mu_{H_\ell}$, then we have
\begin{equation}
I_{H,f} = \prod_{\ell}
\int_{H_\ell} f_\ell d\mu_{H_\ell}\,.
\end{equation}
\end{prop}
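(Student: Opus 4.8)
The plan is to reduce the statement to the single observation that the normalised Haar measure on the product $H=\prod_{\ell}H_\ell$ is the product of the normalised Haar measures on the factors. Since the index set is finite and each $H_\ell$ is a closed subgroup of the profinite group $G_\ell$, hence compact, the product measure $\prod_{\ell}\mu_{H_\ell}$ is a regular Borel measure on $H$ of total mass $1$ that is invariant under translation (translations act coordinatewise, and each $\mu_{H_\ell}$ is translation-invariant); by uniqueness of the normalised Haar measure on a compact group, $\mu_H=\prod_{\ell}\mu_{H_\ell}$. This is exactly the argument already used for $H'$ and the $H'_\ell$ in the discussion preceding Theorem~\ref{thm:decomposition}. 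Granting this, the proposition is purely measure-theoretic: applied to the integrand $f=\prod_{\ell}f_\ell$ on the finite product of probability spaces $(H_\ell,\mu_{H_\ell})$, Tonelli's theorem applied to $|f|=\prod_{\ell}|f_\ell|$ shows $f$ is $\mu_H$-integrable, and Fubini's theorem gives $I_{H,f}=\int_H f\,d\mu_H=\prod_{\ell}\int_{H_\ell}f_\ell\,d\mu_{H_\ell}$.

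If one prefers to avoid invoking the general Fubini theorem, the local constancy of the $f_\ell$ makes the computation elementary. Since each $H_\ell$ is compact and $f_\ell$ is locally constant, its level sets form a finite clopen partition of $H_\ell$, so we may write $f_\ell=\sum_{i}c_{\ell,i}\one_{U_{\ell,i}}$ with $c_{\ell,i}\in\C$. Then
\[
f=\prod_{\ell}f_\ell=\sum_{(i_\ell)_\ell}\Bigl(\prod_{\ell}c_{\ell,i_\ell}\Bigr)\one_{\prod_{\ell}U_{\ell,i_\ell}},
\]
a finite sum indexed by the clopen partition $\{\prod_{\ell}U_{\ell,i_\ell}\}$ of $H$. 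Using $\mu_H=\prod_{\ell}\mu_{H_\ell}$ we get $\mu_H\bigl(\prod_{\ell}U_{\ell,i_\ell}\bigr)=\prod_{\ell}\mu_{H_\ell}(U_{\ell,i_\ell})$, so integrating term by term and distributing the resulting finite sum over the product yields
\[
I_{H,f}=\sum_{(i_\ell)_\ell}\prod_{\ell}c_{\ell,i_\ell}\,\mu_{H_\ell}(U_{\ell,i_\ell})=\prod_{\ell}\Bigl(\sum_{i}c_{\ell,i}\,\mu_{H_\ell}(U_{\ell,i})\Bigr)=\prod_{\ell}\int_{H_\ell}f_\ell\,d\mu_{H_\ell}.
\]

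One could also deduce the result from Theorem~\ref{thm:decomposition} together with \eqref{integralstep}: in the special case $H\subseteq G'$ one has $H'=H\cap G'=H$, so $H/H'$ is trivial, the single fibre is $H(x)=H=\prod_{\ell}H_\ell$, and Theorem~\ref{thm:decomposition} with $f_{x,\ell}=f_\ell$ gives the claim immediately; in general, writing $H'_\ell:=H_\ell\cap G'_\ell$ one has $H'=\prod_{\ell}H'_\ell$, $H/H'=\prod_{\ell}(H_\ell/H'_\ell)$ and $(H:H')=\prod_{\ell}(H_\ell:H'_\ell)$, so the sum over $x\in H/H'$ in \eqref{formula1} factors as a product of sums over $x_\ell\in H_\ell/H'_\ell$, and applying \eqref{integralstep} inside each $G_\ell$ recovers $\int_{H_\ell}f_\ell\,d\mu_{H_\ell}$. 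I do not expect a genuine obstacle: the only point that needs care is the identification $\mu_H=\prod_{\ell}\mu_{H_\ell}$, and — if one takes the Fubini route — checking that the version used applies to a finite product of probability spaces with a measurable integrable integrand, which local constancy of the $f_\ell$ reduces to the elementary computation above.
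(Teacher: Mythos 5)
Your proposal is correct and follows essentially the same route as the paper, whose proof is simply the observation that $\mu_H=\prod_{\ell}\mu_{H_\ell}$ combined with the product decomposition of $f$; you merely make explicit the uniqueness-of-Haar-measure justification and the Fubini (or elementary locally-constant) computation that the paper leaves implicit. The alternative derivations you sketch are fine but not needed.
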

\begin{proof}
The assertion follows from the product decomposition for $f$ and the fact that $\mu_{H}$ is the product of the $\mu_{H_\ell}$.
\end{proof}

\section{The arboreal representation}\label{arboreal}

Let $K$ be a number field, and let $\overline{K}$ be an algebraic closure of~$K$. Let $A$ be a connected commutative algebraic group over $K$, and let $b_A$ be the first Betti number of $A$. We fix a square-free positive integer $m$, and we let $\ell$ vary in the set of prime divisors of $m$. We also fix a point $\alpha\in A(K)$.

We define $T_m A$ as the projective limit of the torsion groups $A[m^n]$ for $n\geqslant 1$; we can write $T_m A=\prod_{\ell} T_\ell A$, where the Tate module $T_\ell A$ is a free $\Z_\ell$-module of rank~$b_A$.

We define the \emph{torsion fields}
$$
K_{m^{-n}}:=K(A[m^n])\quad\text{for }n\geqslant1
$$
and
$$
K_{m^{-\infty}} := \bigcup_{n\geqslant1}K_{m^{-n}}.
$$
The Galois action on the $m$-power torsion points of $A$ gives the \emph{$m$-adic representation} of $A$, which maps $\Gal(\overline{K}/K)$ to the automorphism group of $T_m A$. We can also speak of the \emph{mod $m^n$ representation}, which describes the Galois action on $A[m^n]$. Choosing a $\Z_\ell$-basis for $T_\ell A$, we can identify the image of the $m$-adic representation with a subgroup of $\prod_\ell \GL_{b_A}(\Z_\ell)$ and the image of the mod $m^n$ representation with a subgroup of $\prod_\ell \GL_{b_A}(\Z/\ell^n\Z)$.

For $n\geqslant 1$, let $m^{-n}\alpha$ be the set of points in $A(\overline{K})$ whose $m^n$-th multiple equals $\alpha$.
We also write
$$
m^{-\infty}\alpha = \varprojlim_{n\geqslant 1} m^{-n}\alpha\,.
$$
This is the set of sequences $\beta=\{\beta_n\}_{n\geqslant 1}$ such that $m\beta_1=\alpha$ and $m\beta_{n+1}=\beta_n$ for every $n\geqslant 1$; it is a torsor under $T_m A$.  We note that $m^{-n}0=A[m^n]$ and $m^{-\infty}0=T_m A$.

We define the fields
$$
K_{m^{-n}\alpha}:=K(m^{-n}\alpha)\quad\text{for }n\geqslant1
$$
and
$$
K_{m^{-\infty}\alpha} := \bigcup_{n\geqslant1}K_{m^{-n}\alpha}.
$$
We call the field extension $K_{m^{-n}\alpha}/K_{m^{-n}}$ the $m^n$-\emph{Kummer extension} defined by the point~$\alpha$.
We view the $m$-adic representation as a representation of $\Gal({K_{m^{-\infty}\alpha}}/K)$.

We fix an element $\beta\in m^{-\infty}\alpha$, and define the \emph{arboreal representation}
$$
\begin{array}{cccc}
\omega_\alpha : & \Gal(K_{m^{-\infty}\alpha}/K) & \longrightarrow & T_m A \rtimes \Aut(T_m A)\\
& \sigma & \longmapsto & (t, M)
\end{array}
$$
where $M$ is the image of $\sigma$ under the $m$-adic representation and
$t = \sigma(\beta)-\beta$. The arboreal representation is an injective homomorphism of profinite groups identifying $\Gal(K_{m^{-\infty}\alpha}/K)$ with a subgroup of $$T_m A \rtimes \Aut(T_m A)\cong  \prod_\ell \Z_\ell^{b_A} \rtimes \prod_\ell \GL_{b_A}(\Z_\ell)\cong\prod_\ell (\Z_\ell^{b_A} \rtimes \GL_{b_A}(\Z_\ell))\,.$$

Similarly, for $\sigma \in \Gal(K_{m^{-n}\alpha}/K)$ we can consider $t \in A[m^n] \cong \prod_\ell(\Z/\ell^n\Z)^{b_A}$ and $M \in \Aut A[m^n] \cong \prod_\ell \GL_{b_A}(\Z/\ell^n\Z)$.

Denote by $\imGal({m^\infty}) \subseteq \prod_\ell \GL_{b_A}(\Z_\ell)$ the image of the $m$-adic representation and by $\imGal({m^n})$ the image of the mod $m^n$ representation. Consider the image of the $\ell$-adic representation in $\GL_{b_A}(\Z_\ell)$: the dimension of its Zariski closure in $\GL_{b_A,\mathbb{Q}_\ell}$ is independent of $\ell$ and $K$, and we denote it by $d_A$. For example, if $A$ is an elliptic curve, we have $d_A=2$ if $A$ has  complex multiplication, and $d_A=4$ otherwise.

\begin{defi}\label{defi-conditions} We say that $(A/K,m)$ satisfies \emph{eventual maximal growth of the torsion fields} if there exists a positive integer $n_0$ such that for all $N\geqslant n\geqslant n_0$ we have
$$
[K_{m^{-N}}:K_{m^{-n}}]=m^{d_A(N-n)}\,.
$$
We say that $(A/K,m, \alpha)$ satisfies \emph{eventual maximal growth of the Kummer extensions} if there exists a positive integer $n_0$ such that for all $N\geqslant n \geqslant n_0$ we have
\begin{equation}\label{refomulation-Defi}
[K_{m^{-N}\alpha}:K_{m^{-n}\alpha}]=(m^{b_A+d_A})^{N-n}\,.
\end{equation}
\end{defi}

\begin{rem}
Condition \eqref{refomulation-Defi} means that there is eventual maximal growth of the torsion fields, that $K_{m^{-n}\alpha}$ and $K_{m^{-N}}$ are linearly disjoint over $K_{m^{-n}}$, and that we have $$[K_{m^{-N}\alpha}:K_{m^{-N}}(m^{-n}\alpha)]=m^{b_A(N-n)}\,.$$
If there is eventual maximal growth of the Kummer extensions, the rational number
\begin{equation}\label{defi-failure}
\failure_m:=m^{b_A n}/[K_{m^{-n}\alpha}:K_{m^{-n}}]
\end{equation}
is independent of $n$  for $n \geqslant n_0$.
\end{rem}

\begin{prop}\label{prop-maximalgrowth} If $A$ is a semiabelian variety, then $(A/K,m)$ satisfies eventual maximal growth of the torsion fields. If $A$ is the product of an abelian variety and a torus and $\mathbb Z \alpha$ is Zariski dense in $A$, then $(A/K,m,\alpha)$ satisfies eventual maximal growth of the Kummer extensions.
\end{prop}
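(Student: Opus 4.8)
The plan is to deduce both assertions from the corresponding statements for the individual primes $\ell\mid m$, together with a near-linear-disjointness argument over $K$. Since $m$ is square-free we have $A[m^n]=\bigoplus_{\ell\mid m}A[\ell^n]$, and, choosing for each $\ell\mid m$ the point $(m^n/\ell^n)\beta_n\in\ell^{-n}\alpha$ attached to a chosen $\beta_n\in m^{-n}\alpha$, one checks that both the torsion and the Kummer fields are composita: $K_{m^{-n}}=\prod_{\ell\mid m}K_{\ell^{-n}}$ and $K_{m^{-n}\alpha}=\prod_{\ell\mid m}K_{\ell^{-n}\alpha}$. In particular the arboreal representation realises $\Gal(K_{m^{-\infty}\alpha}/K)$ as a closed subgroup of $\prod_{\ell\mid m}\Gal(K_{\ell^{-\infty}\alpha}/K)$ surjecting onto every factor, and likewise $\Gal(K_{m^{-\infty}}/K)$ embeds into $\prod_{\ell\mid m}\imGal(\ell^\infty)$ and surjects onto every factor.

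Next I would record the per-prime statements. For each $\ell\mid m$ the image $\imGal(\ell^\infty)$ is open in the group of $\Q_\ell$-points of its Zariski closure (the theorem of Bogomolov, valid more generally for semiabelian varieties), so it is a compact $\ell$-adic Lie group of dimension exactly $d_A$; by the structure theory of such groups --- it has an open uniform pro-$\ell$ subgroup --- its congruence filtration eventually has all successive quotients of order $\ell^{d_A}$, whence $[K_{\ell^{-N}}:K_{\ell^{-n}}]=\ell^{d_A(N-n)}$ for $N\geqslant n$ large. If moreover $\Z\alpha$ is Zariski dense in $A$, the Kummer map $\Gal(K_{\ell^{-\infty}\alpha}/K_{\ell^{-\infty}})\to T_\ell A$ has open image (for the abelian factor this is the Kummer theory of Bashmakov and Ribet, for the toric factor classical multiplicative Kummer theory, the two being almost linearly disjoint over $K_{\ell^{-\infty}}$), and combining this with the previous bound gives $[K_{\ell^{-N}\alpha}:K_{\ell^{-n}\alpha}]=(\ell^{b_A+d_A})^{N-n}$ for $N\geqslant n$ large. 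For the Galois modules in question these $\ell$-adic facts are the ones already used in \cite{LombardoPerucca}.

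It then remains to assemble the primes. The key observation is that $\Gal(K_{\ell^{-\infty}\alpha}/K)$ is virtually pro-$\ell$: inside $T_\ell A\rtimes\Aut(T_\ell A)$ it contains the open pro-$\ell$ subgroup obtained by intersecting it with $\ell T_\ell A\rtimes\Gamma_\ell(1)$, where $\Gamma_\ell(1)=\ker(\GL_{b_A}(\Z_\ell)\to\GL_{b_A}(\F_\ell))$. Similarly $\Gal(K_{(m/\ell)^{-\infty}\alpha}/K)$ is virtually pro-$S$, where $S$ is the set of primes dividing $m/\ell$; note that $\ell\notin S$ since $m$ is square-free. A profinite group that is a quotient of both then contains an open pro-$\ell$ subgroup and an open pro-$S$ subgroup, whose intersection is trivial, and is therefore finite; by Goursat's lemma, $\Gal(K_{m^{-\infty}\alpha}/K)$ consequently has finite index in $\prod_{\ell\mid m}\Gal(K_{\ell^{-\infty}\alpha}/K)$, and the same holds for the torsion fields. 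Hence, once $n$ is large enough that $\prod_{\ell\mid m}\Gal(K_{\ell^{-\infty}\alpha}/K_{\ell^{-n}\alpha})$ lies inside the open subgroup $\Gal(K_{m^{-\infty}\alpha}/K)$, we obtain
\[
\Gal(K_{m^{-\infty}\alpha}/K_{m^{-n}\alpha})=\prod_{\ell\mid m}\Gal(K_{\ell^{-\infty}\alpha}/K_{\ell^{-n}\alpha}),
\]
so that $[K_{m^{-N}\alpha}:K_{m^{-n}\alpha}]=\prod_{\ell\mid m}[K_{\ell^{-N}\alpha}:K_{\ell^{-n}\alpha}]$ for all $N\geqslant n\geqslant n_0$ with $n_0$ suitably large; multiplying the per-prime formulas then yields $[K_{m^{-N}\alpha}:K_{m^{-n}\alpha}]=(m^{b_A+d_A})^{N-n}$, and the identical argument applied to the torsion fields gives $[K_{m^{-N}}:K_{m^{-n}}]=m^{d_A(N-n)}$.

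I expect the genuine difficulty to be imported rather than internal. The openness of $\imGal(\ell^\infty)$ in its algebraic monodromy group and, above all, the openness of the Kummer map image for a point generating a Zariski-dense subgroup are the deep inputs; the latter is exactly where the hypotheses that $A$ splits as a product of an abelian variety and a torus and that $\Z\alpha$ is Zariski dense are used, and it is the general-$\ell$ analogue of what \cite{LombardoPerucca} needs in the prime case. What is new here is the packaging, whose one delicate point is that the entanglement fields $K_{\ell^{-\infty}\alpha}\cap K_{(m/\ell)^{-\infty}\alpha}$ are finite over $K$ --- precisely what the virtually-pro-$\ell$ observation provides --- together with the attendant need to pass to a sufficiently large level $n_0$ before the product decomposition of the Galois groups becomes an equality.
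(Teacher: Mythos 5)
Your proposal is correct and takes essentially the same route as the paper: prove eventual maximal growth prime by prime (the deep inputs you invoke, openness of the $\ell$-adic image and of the Kummer image, are exactly what lies behind \cite[Lemma 10 and Remark 7]{LombardoPerucca}, which the paper simply cites) and then glue the primes using that, above a bounded level, the torsion and Kummer towers at distinct primes are pro-$\ell$ for distinct $\ell$, hence independent. Your Goursat/finite-entanglement argument at the infinite level, followed by compactness to reach a suitable $n_0$, is just a slightly heavier packaging of the paper's one-line observation that the relevant relative extensions have pairwise coprime degrees and are therefore linearly disjoint over $K_{m^{-1}}$ (respectively over the corresponding Kummer base).
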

\begin{proof}
By \cite[Lemma 10]{LombardoPerucca}, if $A$ is a semiabelian variety and $\ell$ is a prime divisor of~$m$, then $(A/K,\ell, \alpha)$ satisfies eventual maximal growth of the torsion fields. We also know that the degree $[K_{\ell^{-n}}:K_{\ell^{-1}}]$ is a power of $\ell$. Therefore the extensions $K_{m^{-1}}K_{\ell^{-n}}$ for $\ell\mid n$ are linearly disjoint over $K_{m^{-1}}$ and the first assertion follows.
By \cite[Remark 7]{LombardoPerucca}, the second assertion holds for $(A/K,\ell, \alpha)$, where $\ell$ is any prime divisor of $m$. We conclude because these Kummer extensions have order a power of $\ell$.
\end{proof}

\section{Relating the density and the arboreal representation}

\subsection{The existence of the density}

From now on, we assume that $(A/K,m,\alpha)$ satisfies eventual maximal growth of the Kummer extensions.

\begin{rem}
This is not a restriction if $A$ is the product of an abelian variety and a torus by Proposition \ref{prop-maximalgrowth}. Indeed, consider the number of connected components of the Zariski closure of $\mathbb Z \alpha$. If this number is not coprime to $m$, then the density $\Dens_m(\alpha)$ is zero by \cite[Main Theorem]{PeruccaJNT} while if it is coprime to $m$ we may replace $\alpha$ by a multiple to reduce to the case where the Zariski closure of $\mathbb Z \alpha$ is connected. Finally, we may replace $A$ and reduce to the case where $\mathbb Z \alpha$ is Zariski dense.
\end{rem}

The $T_m A$-torsor $m^{-\infty}\alpha$ from Section \ref{arboreal} defines a Galois cohomology class
$$
C_\alpha \in H^1(\Gal(K_{m^{-\infty}\alpha}/K), T_m A)\,.
$$
For any choice of $\beta\in m^{-\infty}\alpha$, this is the class of the cocycle
$$
\begin{array}{cccc}
c_\beta: & \Gal(K_{m^{-\infty}\alpha}/K)& \longrightarrow & T_m A\\
& \sigma & \longmapsto & \sigma(\beta)-\beta\,.
\end{array}$$
We also consider the restriction map with respect to the cyclic subgroup generated by some element $\sigma$:
$$\Res_\sigma:  H^1\bigl(\Gal(K_{m^{-\infty}\alpha}/K), T_m A\bigr) \longrightarrow H^1({\langle \sigma \rangle}, T_m A)\,.$$

\begin{thm}\label{thm:interpretation}
If $(A/K,m,\alpha)$ satisfies eventual maximal growth of the Kummer extensions, then the density $\Dens_m(\alpha)$ exists and equals the normalized Haar measure in $\Gal(K_{m^{-\infty}\alpha}/K)$ of the subset
$$
S_\alpha:=\{\sigma\,:\, C_\alpha\in \ker(\Res_\sigma) \}=\{\sigma\;:\;  \sigma(\beta)=\beta\, \text{ for some $\beta\in m^{-\infty} \alpha$}\}\,.
$$
\end{thm}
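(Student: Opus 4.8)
The plan is to compare, level by level, the set of primes in question with a Chebotarev set and then pass to the limit; this follows the strategy of \cite{LombardoPerucca} for $m$ prime. First I would set up the finite-level picture. For all but finitely many primes $\p$ of $K$ --- call them \emph{admissible} --- the reduction $(\alpha\bmod\p)$ is well defined, $\p\nmid m$, and $A$ has good reduction at $\p$. Because $m$ is square-free one has $m^nA(\F_\p)=\bigcap_{\ell\mid m}\ell^nA(\F_\p)$, and $\ell^{v_\ell(\#A(\F_\p))}A(\F_\p)$ is the prime-to-$\ell$ part of $A(\F_\p)$; hence an admissible $\p$ has $(\alpha\bmod\p)$ of order coprime to $m$ if and only if $(\alpha\bmod\p)\in m^nA(\F_\p)$ for \emph{every} $n\geqslant1$. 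For fixed $n$, reduction mod $\p$ induces a $\Gal(\overline{\F_\p}/\F_\p)$-equivariant bijection from $m^{-n}\alpha$ onto the set of $m^n$-th division points of $(\alpha\bmod\p)$ in $A(\overline{\F_\p})$, so for admissible $\p$ unramified in $K_{m^{-n}\alpha}$ the condition $(\alpha\bmod\p)\in m^nA(\F_\p)$ is equivalent to $\Frob_\p$ fixing a point of $m^{-n}\alpha$. Writing $G_n=\Gal(K_{m^{-n}\alpha}/K)$ and letting $T_n\subseteq G_n$ be the conjugation-stable set of elements fixing some point of $m^{-n}\alpha$, the Chebotarev density theorem then shows that the set $E_n$ of primes with $(\alpha\bmod\p)\in m^nA(\F_\p)$ has density $\mu_{G_n}(T_n)$.

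Next I would relate the $T_n$ to $S_\alpha$. The two descriptions of $S_\alpha$ coincide: the restriction of the cocycle $c_\beta$ to $\langle\sigma\rangle$ is a coboundary precisely when $\sigma(\beta)-\beta=\sigma(t)-t$ for some $t\in T_mA$, i.e.\ when $\sigma$ fixes $\beta-t\in m^{-\infty}\alpha$. Writing $\pi_n\colon\Gal(K_{m^{-\infty}\alpha}/K)\to G_n$ for the projection, I would show $S_\alpha=\bigcap_{n\geqslant1}\pi_n^{-1}(T_n)$: the inclusion ``$\subseteq$'' is immediate, and for ``$\supseteq$'' one notes that, for $\sigma$ in the intersection, the sets $\{\beta\in m^{-n}\alpha:\sigma(\beta)=\beta\}$ are finite, non-empty and form a projective system, so their inverse limit is non-empty and produces a point of $m^{-\infty}\alpha$ fixed by $\sigma$. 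Since the $\pi_n^{-1}(T_n)$ are clopen and decreasing, $S_\alpha$ is measurable and $\mu(S_\alpha)=\lim_n\mu_{G_n}(T_n)=\lim_n\dens(E_n)$.

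It then remains to show that $P:=\bigcap_nE_n$ --- which, up to admissibility, is exactly the set of $\p$ with $(\alpha\bmod\p)$ of order coprime to $m$ --- has density $\mu(S_\alpha)$. From $P\subseteq E_n$ the upper density of $P$ is at most $\dens(E_n)$ for each $n$, hence at most $\mu(S_\alpha)$. For the lower bound, the key remark is that if an admissible $\p\in E_n$ does \emph{not} have order coprime to $m$, and $\ell\mid m$ divides $\ord(\alpha\bmod\p)$, then the inclusion $(\alpha\bmod\p)\in\ell^nA(\F_\p)$ forces $v_\ell(\#A(\F_\p))\geqslant n+1$; thus $E_n\setminus P\subseteq\mathcal E_n:=\{\p:\ell^{n+1}\mid\#A(\F_\p)\text{ for some }\ell\mid m\}$. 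Now $\mathcal E_n$ is again a Chebotarev set, because the condition $\ell^{n+1}\mid\#A(\F_\p)$ depends only on $\Frob_\p$ modulo $A[\ell^{n+1}]$ (as $v_\ell(\#A(\F_\p))$ is the $\ell$-adic valuation of the characteristic polynomial of $\Frob_\p$ acting on $T_\ell A$, evaluated at $1$), so it has a density; the lower density of $P$ is then at least $\dens(E_n)-\dens(\mathcal E_n)$, and the whole argument closes once one knows $\dens(\mathcal E_n)\to0$ as $n\to\infty$.

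That last estimate is the main obstacle: it amounts to controlling the ``tail'' of primes that require arbitrarily deep levels to witness that the order is not coprime to $m$. Its limiting value is the Haar measure, inside the image of the $\ell$-adic representation, of the locus where $1$ is a Frobenius eigenvalue, and showing this locus is a null set requires genuine information on the size of the Galois image --- either the eventual maximal growth of the torsion fields that is in force here (which makes that image eventually open in a group in which the locus is a proper Zariski-closed subvariety), or, for $A$ a product of an abelian variety and a torus, the vanishing $(T_\ell A)^{\Gal}=0$ together with Bogomolov's theorem on the presence of homotheties in the $\ell$-adic image. Everything else reduces to the group-theoretic reformulation above and to the Chebotarev density theorem applied at fixed finite levels.
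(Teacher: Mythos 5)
Your proposal is correct and follows essentially the same route as the paper, whose proof simply defers to the arguments of \cite[Theorem 3.2]{JonesRouse} and \cite[Theorem 5]{LombardoPerucca}: you spell out exactly that generalisation to composite $m$ (finite-level Chebotarev comparison, the cocycle/torsor reformulation of $S_\alpha$ via a projective-limit argument, and the tail estimate on primes with $\ell^{n+1}\mid\#A(\mathbb F_{\mathfrak p})$). The null-set input you identify at the end is precisely what eventual maximal growth of the torsion fields provides, and it is available in the paper as Proposition \ref{partition}(5), proved as in \cite[Lemma 21]{LombardoPerucca}, so no genuine gap remains.
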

\begin{proof}
The generalisations of \cite[Theorem 3.2]{JonesRouse} and \cite[Theorem 5]{LombardoPerucca} to the composite case are straightforward.
\end{proof}

Similarly to \cite[Remark 19]{LombardoPerucca}, we may equivalently consider $S_\alpha$ as a subset of either $\Gal(\overline{K}/K)$ or $\Gal(K_{m^{-\infty}\alpha}/K)$ with their respective normalised Haar measures.

\begin{prop}\label{alien}
If $L/K$ is any Galois extension that is linearly disjoint from $K_{m^{-\infty}\alpha}$ over $K$, then we have $\Dens_L(\alpha)=\Dens_K(\alpha)$.
\end{prop}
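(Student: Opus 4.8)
The plan is to read off both densities from the Haar-measure description in Theorem~\ref{thm:interpretation}, and to compare them via a single restriction isomorphism of Galois groups. Since $L/K$ is algebraic we may take $\overline{L}=\overline{K}$, so that the set $m^{-\infty}\alpha$ and the fields $K_{m^{-n}}$, $K_{m^{-n}\alpha}$ are literally the same over $K$ and over~$L$. Put $L_{m^{-\infty}\alpha}:=L\cdot K_{m^{-\infty}\alpha}$, the extension of $L$ generated by $m^{-\infty}\alpha$; it is Galois over~$L$ because $K_{m^{-\infty}\alpha}/K$ is Galois. As $L/K$ is Galois and linearly disjoint from $K_{m^{-\infty}\alpha}$ over~$K$, restriction of automorphisms is an isomorphism of profinite groups
\[
r\colon\Gal(L_{m^{-\infty}\alpha}/L)\;\xrightarrow{\ \sim\ }\;\Gal(K_{m^{-\infty}\alpha}/K)\,,
\]
and since $m^{-\infty}\alpha$ lies inside $K_{m^{-\infty}\alpha}$, the automorphism $r(\sigma)$ acts on $m^{-\infty}\alpha$ exactly as $\sigma$ does.

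Next I would check that Theorem~\ref{thm:interpretation} is available over~$L$. Applying the same linear disjointness to the intermediate fields, restriction identifies $\Gal(L_{m^{-N}\alpha}/L)$ with $\Gal(K_{m^{-N}\alpha}/K)$ compatibly with passing to the subfields $L_{m^{-n}\alpha}$ and $L_{m^{-n}}$, so that
\[
[L_{m^{-N}\alpha}:L_{m^{-n}\alpha}]=[K_{m^{-N}\alpha}:K_{m^{-n}\alpha}]\qquad\text{for all }N\geqslant n\geqslant1\,.
\]
Hence $(A/L,m,\alpha)$ inherits eventual maximal growth of the Kummer extensions from $(A/K,m,\alpha)$, and Theorem~\ref{thm:interpretation} gives that $\Dens_L(\alpha)$ exists and equals the normalised Haar measure of
\[
S_\alpha^{L}:=\{\sigma\in\Gal(L_{m^{-\infty}\alpha}/L)\;:\;\sigma(\beta)=\beta\text{ for some }\beta\in m^{-\infty}\alpha\}\,.
\]

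To conclude, I would note that, by the displayed equivariance of~$r$, an element $\sigma$ fixes some $\beta\in m^{-\infty}\alpha$ if and only if $r(\sigma)$ does; thus $r$ maps $S_\alpha^{L}$ bijectively onto~$S_\alpha$. Since an isomorphism of profinite groups carries the normalised Haar measure to the normalised Haar measure, we get
\[
\Dens_L(\alpha)=\mu_{\Gal(L_{m^{-\infty}\alpha}/L)}(S_\alpha^{L})=\mu_{\Gal(K_{m^{-\infty}\alpha}/K)}(S_\alpha)=\Dens_K(\alpha)\,.
\]
The only step needing any care --- and the closest thing to an obstacle --- is the bookkeeping of the previous paragraph, which is needed even to make $\Dens_L(\alpha)$ meaningful through Theorem~\ref{thm:interpretation}; but this, like everything else, is a formal consequence of the single fact that linear disjointness turns restriction into a Galois-equivariant isomorphism, and no new analytic input is required.
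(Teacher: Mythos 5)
Your argument is correct and is essentially the argument the paper has in mind: the paper simply delegates to the composite-case generalisation of \cite[Proposition 20]{LombardoPerucca}, which rests on the same two ingredients you use, namely the Haar-measure description of the density from Theorem~\ref{thm:interpretation} (available over $L$ because linear disjointness transfers the eventual maximal growth of the Kummer extensions) and the restriction isomorphism $\Gal(LK_{m^{-\infty}\alpha}/L)\xrightarrow{\sim}\Gal(K_{m^{-\infty}\alpha}/K)$ identifying $S_\alpha^L$ with $S_\alpha$. Your write-up just spells out the details that the paper's one-line citation leaves implicit.
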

\begin{proof}
The generalisation of \cite[Proposition 20]{LombardoPerucca} to the composite case is  straightforward.
\end{proof}

\subsection{Counting elements in the image of the arboreal representation}

\begin{defi}\label{WuwuM}
For $M\in \imGal(m^n)$ we define
\begin{equation}\label{WuM}
{\set}_{m^n}(M):=\{ t\in A[m^{n}] \mid (t,M)\in \Gal({K_{m^{-n}\alpha}/K)}\}
\end{equation}
and
\begin{equation}\label{wuM}
\function_{m^n}(M):=\frac{\# \bigl(\im(M-I) \cap {\set}_{m^n}(M)\bigr)}{\# \im(M-I)} \in \mathbb Q\,.
\end{equation}
\end{defi}

For every prime divisor $\ell$ of $m$ and every $n\geqslant 1$, we
consider the Galois group of the compositum $K_{\ell^{-n}\alpha}K_{m^{-1}}$ over~$K$ and the inclusion
$$
\iota_{\ell^n}\colon
\Gal(K_{\ell^{-n}\alpha}K_{m^{-1}}/K)\hookrightarrow  \bigl(A[\ell^n] \rtimes \imGal(\ell^n)\bigr) \times \imGal(m).
$$

\begin{defi}\label{WuwuV}
For all $x\in \imGal(m)$ and $V\in \imGal(\ell^n)$, we define
\begin{equation}\label{WuV}
{\set}_{x,\ell^n}(V):=\{ \tau\in A[\ell^{n}] \mid (\tau, V, x)\in \im\iota_{\ell^n}\}
\end{equation}
and
\begin{equation}\label{wuV}
\function_{x,\ell^n}(V):=\frac{\# \bigl(\im(V-I) \cap {\set}_{x,\ell^n}(V)\bigr)}{\# \im(V-I)}\in\mathbb Z[1/\ell]\,.
\end{equation}
\end{defi}

We denote by $\pi_{*}$ the projection onto $\imGal(*)$.

\begin{prop}
If $M\in \imGal(m^n)$ is such that $\pi_{m}M=x$, then we have
\begin{equation}\label{bing}
\function_{m^n}(M)=\prod_\ell \function_{x, \ell^n}(\pi_{\ell^n} M)\,.
\end{equation}
\end{prop}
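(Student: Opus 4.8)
The plan is to check that the numerator and the denominator of $\function_{m^n}(M)$ each factor as a product over the primes $\ell\mid m$. Write $M_\ell:=\pi_{\ell^n}M$ and $x:=\pi_mM$. Under the primary decomposition $A[m^n]=\prod_\ell A[\ell^n]$ the matrix $M$ acts block-diagonally, with $\ell$-block $M_\ell$, so $\im(M-I)=\prod_\ell\im(M_\ell-I)$ and hence $\#\im(M-I)=\prod_\ell\#\im(M_\ell-I)$; nothing more is needed for the denominator. The real task is to build a group isomorphism $\phi\colon A[m^n]\xrightarrow{\ \sim\ }\prod_\ell A[\ell^n]$ which intertwines $M-I$ with $(M_\ell-I)_\ell$ and satisfies $\phi\bigl({\set}_{m^n}(M)\bigr)=\prod_\ell{\set}_{x,\ell^n}(M_\ell)$. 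Granting this, $\phi$ carries $\im(M-I)\cap{\set}_{m^n}(M)$ bijectively onto $\prod_\ell\bigl(\im(M_\ell-I)\cap{\set}_{x,\ell^n}(M_\ell)\bigr)$, so the numerators multiply as well, and \eqref{bing} follows by dividing.

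The construction of $\phi$ rests on a Galois-theoretic fact. Since $m$ is square-free we have $\gcd_\ell(m^n/\ell^n)=1$, so $\sum_\ell u_\ell(m^n/\ell^n)=1$ for suitable integers $u_\ell$; putting $\gamma_n^{(\ell)}:=(m^n/\ell^n)\beta_n$ we get $\gamma_n^{(\ell)}\in\ell^{-n}\alpha$, and a short computation shows that $K_{m^{-n}\alpha}$ is the compositum of the fields $E_\ell:=K_{\ell^{-n}\alpha}K_{m^{-1}}$ (each of which contains $F:=K_{m^{-1}}$). By the computation in the proof of Proposition~\ref{prop-maximalgrowth}, $[K_{\ell^{-n}}:K_{\ell^{-1}}]$ is a power of~$\ell$, and the Kummer part $[K_{\ell^{-n}\alpha}:K_{\ell^{-n}}]$ divides $\#A[\ell^n]=\ell^{nb_A}$, so $[K_{\ell^{-n}\alpha}:K_{\ell^{-1}}]$ is a power of~$\ell$; since $K_{\ell^{-1}}\subseteq F$ and $K_{\ell^{-n}\alpha}/K_{\ell^{-1}}$ is Galois, $[E_\ell:F]$ is also a power of~$\ell$. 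The degrees $[E_\ell:F]$ being pairwise coprime, the $E_\ell$ are linearly disjoint over~$F$, so restriction gives $\Gal(K_{m^{-n}\alpha}/F)\xrightarrow{\ \sim\ }\prod_\ell\Gal(E_\ell/F)$. Lifting a common image $x\in\Gal(F/K)=\imGal(m)$ to $\Gal(K_{m^{-n}\alpha}/K)$ and correcting by an element of $\Gal(K_{m^{-n}\alpha}/F)$ upgrades this to the statement that $\sigma\mapsto(\sigma|_{E_\ell})_\ell$ identifies $\Gal(K_{m^{-n}\alpha}/K)$ with the fibre product, over $\imGal(m)$, of the groups $\Gal(E_\ell/K)$.

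Next I would translate this through the arboreal representations, using $\gamma_n^{(\ell)}$ as the base point for $\iota_{\ell^n}$ — a harmless choice, since replacing the base point translates ${\set}_{x,\ell^n}(V)$ by an element of $\im(V-I)$ and hence leaves $\function_{x,\ell^n}(V)$ unchanged. If $\sigma\in\Gal(K_{m^{-n}\alpha}/K)$ has image $(t,M)$ in $A[m^n]\rtimes\imGal(m^n)$, then $\sigma(\gamma_n^{(\ell)})-\gamma_n^{(\ell)}=(m^n/\ell^n)t$, so $\iota_{\ell^n}(\sigma|_{E_\ell})=\bigl((m^n/\ell^n)t,\,M_\ell,\,x\bigr)$. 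Set $\phi(t):=\bigl((m^n/\ell^n)t\bigr)_\ell$, where $(m^n/\ell^n)t$ is annihilated by $\ell^n$ and is viewed in $A[\ell^n]$; since $m^n/\ell^n$ acts invertibly on $A[\ell^n]$ and kills $A[\ell'^n]$ for $\ell'\neq\ell$, this $\phi$ is a group isomorphism, and it intertwines $M-I$ with $(M_\ell-I)_\ell$ because $M$ is block-diagonal. The fibre-product description of $\Gal(K_{m^{-n}\alpha}/K)$ — together with the observation that membership in ${\set}_{x,\ell^n}(M_\ell)$ already forces the $\imGal(m)$-coordinate to be $x=\pi_mM$, which is precisely the gluing condition of the fibre product — says exactly that $t\in{\set}_{m^n}(M)$ if and only if $(m^n/\ell^n)t\in{\set}_{x,\ell^n}(M_\ell)$ for every $\ell$; that is, $\phi\bigl({\set}_{m^n}(M)\bigr)=\prod_\ell{\set}_{x,\ell^n}(M_\ell)$. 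This is the input promised in the first paragraph, and \eqref{bing} follows.

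I expect the second paragraph to be the main obstacle: one has to notice that the $E_\ell$ need not be linearly disjoint over~$K$ (the mod-$\ell$ torsion fields $K_{\ell^{-1}}$ can share subextensions), so the correct statement is independence over $K_{m^{-1}}$, and it is exactly to accommodate this that the definition of $\iota_{\ell^n}$ carries the $\imGal(m)$-coordinate — it turns what would merely be a product into a fibre product over $\imGal(m)$, which is what $\Gal(K_{m^{-n}\alpha}/K)$ actually is. Once the $\ell$-power bound on $[E_\ell:K_{m^{-1}}]$ is secured, the rest is bookkeeping with base points and primary decompositions.
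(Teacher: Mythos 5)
Your proposal is correct and follows essentially the same route as the paper's proof: the primary decomposition of $A[m^n]$ and of $\im(M-I)$, together with the linear disjointness over $K_{m^{-1}}$ of the extensions $K_{\ell^{-n}\alpha}K_{m^{-1}}$ forced by their pairwise coprime ($\ell$-power) degrees. The paper compresses this into a few lines, while you spell out the compositum/fibre-product structure of $\Gal(K_{m^{-n}\alpha}/K)$ over $\imGal(m)$ and the base-point bookkeeping (multiplication by $m^n/\ell^n$), which is a faithful elaboration of the same argument rather than a different one.
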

\begin{proof}
Decomposing $A[m^n]=\bigoplus_\ell A[\ell^n]$, we can write $t=\sum_\ell t_\ell$. We have $\im(M-I)=\bigoplus_\ell \im(\pi_{\ell^n} M-I)$, and since the extensions $K_{\ell^{-n}\alpha}K_{m^{-1}}/K_{m^{-1}}$ have pairwise coprime degrees and hence are linearly disjoint, we have $t\in {\set}_{m^n}(M)$ if and only if for every $\ell$ we have  $t_\ell\in {\set}_{x,\ell^n}(\pi_{\ell^n} M)$.
This implies the claim.
\end{proof}

\begin{lem}\label{bingo}
For all $x\in\imGal(m)$ and $V\in\imGal(\ell^\infty)$, the value $\function_{x,\ell^n}(V)$ is constant for $n$ sufficiently large.
\end{lem}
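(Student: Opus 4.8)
The plan is to descend from the $\ell$-adic level and read $\function_{x,\ell^n}$ off a single pair of $\Z_\ell$-lattices, then invoke maximal growth of the Kummer extensions to see everything stabilise. Write $V_n:=\pi_{\ell^n}V$, so that $\function_{x,\ell^n}(V)$ in the statement means $\function_{x,\ell^n}(V_n)$. Let $\iota_{\ell^\infty}$ be the inverse limit of the maps $\iota_{\ell^n}$; it embeds $\Gal(K_{\ell^{-\infty}\alpha}K_{m^{-1}}/K)$ into $\bigl(T_\ell A\rtimes\imGal(\ell^\infty)\bigr)\times\imGal(m)$. Put $\mathcal{T}:=\{\tau\in T_\ell A\mid(\tau,I,1)\in\im\iota_{\ell^\infty}\}$ and $W:=\im(V-I)\subseteq T_\ell A$; being a closed subgroup of $T_\ell A\cong\Z_\ell^{b_A}$, the group $\mathcal{T}$ is a finitely generated free $\Z_\ell$-module, and $W_n:=\im(V_n-I)\subseteq A[\ell^n]$ is exactly the image of $W$ under reduction modulo $\ell^n T_\ell A$. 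Since $(\tau,V_n,x)$ and $(\tau',V_n,x)$ both lying in $\im\iota_{\ell^n}$ force $(\tau-\tau',I,1)\in\im\iota_{\ell^n}$, the set $\set_{x,\ell^n}(V_n)$ is either empty or a coset of $T_n:=\{\tau\in A[\ell^n]\mid(\tau,I,1)\in\im\iota_{\ell^n}\}$; and in the non-empty case $\function_{x,\ell^n}(V_n)$ equals $\#(W_n\cap T_n)/\#W_n$ if this coset meets $W_n$, and $0$ otherwise.

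The one substantial input is that $\mathcal{T}$ has full rank $b_A$ in $T_\ell A$ and that, for $n$ large, $T_n$ is the reduction of $\mathcal{T}$ modulo $\ell^n T_\ell A$. Both follow from eventual maximal growth of the Kummer extensions over the base field $K_{m^{-1}}$, which holds by Proposition~\ref{prop-maximalgrowth} since $\mathbb Z\alpha$ is Zariski dense over $K$ and hence over $K_{m^{-1}}$: maximal growth forces $\#T_n=\ell^{b_A n}/\failure'$ for $n$ large with $\failure'$ independent of $n$ (here $\#T_n=[K_{\ell^{-n}\alpha}K_{m^{-1}}:K_{\ell^{-n}}K_{m^{-1}}]$), which is possible only if $\mathcal{T}$ has full rank, and it makes the restriction $\Gal(K_{\ell^{-\infty}\alpha}/K_{\ell^{-\infty}}K_{m^{-1}})\to\Gal(K_{\ell^{-n}\alpha}K_{m^{-1}}/K_{\ell^{-n}}K_{m^{-1}})$ surjective, which is exactly the identification of $T_n$ with the reduction of $\mathcal{T}$. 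Fix $a$ with $\ell^a T_\ell A\subseteq\mathcal{T}$; then for $n\ge a$ one has $T_n=\mathcal{T}/\ell^n T_\ell A$, and an elementary computation shows that $W_n\cap T_n$ is the reduction of $W\cap\mathcal{T}$ and $W_n+T_n$ the reduction of $W+\mathcal{T}$.

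To finish, recall that for a finitely generated $\Z_\ell$-submodule $P\subseteq T_\ell A$, once $n$ exceeds the largest elementary divisor exponent of $P$ in $T_\ell A$ one has $\#\bigl(P/(P\cap\ell^n T_\ell A)\bigr)=\ell^{(\rank P)n}/\kappa(P)$ with $\kappa(P)=[P^{\mathrm{sat}}:P]$ depending only on $P$. Since $\mathcal{T}$ has full rank, $W\cap\mathcal{T}$ has the same rank and the same saturation as $W$, so for $n$ large $\#(W_n\cap T_n)/\#W_n$ is the constant $\kappa(W)/\kappa(W\cap\mathcal{T})=[W:W\cap\mathcal{T}]^{-1}=[\,\im(V-I)+\mathcal{T}:\mathcal{T}\,]^{-1}$. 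It remains to see that the alternative (zero versus non-zero) stabilises: if $\{\tau\in T_\ell A\mid(\tau,V,x)\in\im\iota_{\ell^\infty}\}$ is empty then, $\im\iota_{\ell^\infty}$ being closed and $\{(\tau,V,x):\tau\in T_\ell A\}$ compact, $\set_{x,\ell^n}(V_n)$ is empty for all $n$ large; otherwise it is a coset $\tau_\infty+\mathcal{T}$ whose reduction meets $W_n$ precisely when $\tau_\infty\in W+\mathcal{T}+\ell^n T_\ell A$, which for $n\ge a$ is the $n$-independent condition $\tau_\infty\in W+\mathcal{T}$. Hence $\function_{x,\ell^n}(V)$ is constant for $n$ sufficiently large. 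I expect the main obstacle to be the full rank of $\mathcal{T}$ together with the identification $T_n=\mathcal{T}/\ell^n T_\ell A$: these amount to eventual maximal growth of the Kummer extensions, and the point requiring care is that one needs this over $K_{m^{-1}}$ rather than over $K$, so the argument of Proposition~\ref{prop-maximalgrowth} must be run again with that base field.
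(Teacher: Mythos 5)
Your proof is, in substance, the argument the paper delegates to the citation of \cite[Lemma 23]{LombardoPerucca}, adapted to the composite setting: identify $\set_{x,\ell^n}(V)$ as empty or a coset of $T_n\cong\Gal(K_{m^{-1}}K_{\ell^{-n}\alpha}/K_{m^{-1}}K_{\ell^{-n}})$, realise $T_n$ and $\im(\pi_{\ell^n}V-I)$ as reductions of the $\Z_\ell$-lattices $\mathcal T$ and $W=\im(V-I)$, and check that the resulting index computation stabilises. The lattice manipulations are correct: $(W+\ell^nT_\ell A)\cap(\mathcal T+\ell^nT_\ell A)=(W\cap\mathcal T)+\ell^nT_\ell A$ once $\ell^aT_\ell A\subseteq\mathcal T$ and $n\geqslant a$, the stabilised value is $[W:W\cap\mathcal T]^{-1}$ when the coset meets $W$, and the compactness/inverse-limit argument correctly shows that the empty-versus-nonempty alternative also stabilises.

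The one step to repair is where the key input comes from. You obtain the full rank of $\mathcal T$ and the identification $T_n=\mathcal T\bmod\ell^n$ by invoking Proposition~\ref{prop-maximalgrowth} over the base field $K_{m^{-1}}$, which requires $A$ to be the product of an abelian variety and a torus and $\Z\alpha$ to be Zariski dense. Neither hypothesis is in force here: the standing assumption of this section is only that $(A/K,m,\alpha)$ satisfies eventual maximal growth of the Kummer extensions, for $A$ a connected commutative algebraic group, and the lemma is stated in that generality. Fortunately the standing assumption already gives what you need, so the appeal to Proposition~\ref{prop-maximalgrowth} can be dropped. Since degrees only drop under base change and $K_{m^{-n}}\supseteq K_{m^{-1}}K_{\ell^{-n}}$, we have $\#T_n=[K_{m^{-1}}K_{\ell^{-n}\alpha}:K_{m^{-1}}K_{\ell^{-n}}]\geqslant[K_{m^{-n}}K_{\ell^{-n}\alpha}:K_{m^{-n}}]$. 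Moreover $K_{m^{-n}\alpha}$ is the compositum of the fields $K_{m^{-n}}K_{\ell'^{-n}\alpha}$ over the primes $\ell'\mid m$ (the set $m^{-n}\alpha$ is recovered from the sets $\ell'^{-n}\alpha$ by a B\'ezout combination), and each of these fields has degree at most $\ell'^{b_An}$ over $K_{m^{-n}}$ because its Galois group injects into $A[\ell'^n]$. Combining this with $[K_{m^{-n}\alpha}:K_{m^{-n}}]=m^{b_An}/\failure_m$ for $n\geqslant n_0$ yields $[K_{m^{-n}}K_{\ell^{-n}\alpha}:K_{m^{-n}}]\geqslant\ell^{b_An}/\failure_m$, hence $\ell^{b_An}/\#T_n\leqslant\failure_m$. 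Since this quantity is non-decreasing in $n$ (the kernel of $T_{n+1}\to T_n$ has order at most $\ell^{b_A}$), it is eventually constant, which is exactly the statement you extract from maximal growth over $K_{m^{-1}}$: it forces $\mathcal T$ to have full rank and the maps $\mathcal T\to T_n$ to be surjective for $n$ large. With this substitution your proof is complete in the stated generality.
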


\begin{proof}
This is proved as in \cite[Lemma 23]{LombardoPerucca}.
\end{proof}

By Lemma~\ref{bingo}, we can define
\begin{equation}\label{function-ell-inf}
\function_{x,\ell^\infty}(V)=\lim_{n\to\infty}\function_{x,\ell^n}(V)\in\mathbb Z[1/\ell]\,.
\end{equation}
From \eqref{bing} we deduce that for all $M\in\imGal(m^\infty)$, the value $\function_{m^n}(M)$ is also constant for $n$ sufficiently large, so we can analogously define
\begin{equation}\label{function-m-inf}
\function_{m^\infty}(M)=\lim_{n\to\infty}\function_{m^n}(M) \in \mathbb Q\,.
\end{equation}

\begin{prop}
If $M\in \imGal(m^\infty)$ is such that $\pi_{m}M=x$, then we have
\begin{equation}\label{binginfty}
\function_{m^\infty}(M)=\prod_\ell \function_{x, \ell^\infty}(\pi_{\ell^\infty} M)\,.
\end{equation}
\end{prop}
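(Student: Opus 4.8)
The plan is to obtain \eqref{binginfty} simply by passing to the limit $n\to\infty$ in the finite-level identity \eqref{bing}. For each $n\geqslant1$, let $M_n\in\imGal(m^n)$ denote the reduction of $M$ modulo $m^n$; since $\pi_m$ factors through $\pi_{m^n}$ we still have $\pi_m M_n=x$, so \eqref{bing} applies and gives
\[
\function_{m^n}(M_n)=\prod_\ell \function_{x,\ell^n}(\pi_{\ell^n}M)\,,
\]
where $\pi_{\ell^n}M\in\imGal(\ell^n)$ is the reduction modulo $\ell^n$ of $\pi_{\ell^\infty}M\in\imGal(\ell^\infty)$ (equivalently, the $\ell$-component of $M_n$).

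First I would use that the index set $\{\ell\mid m\}$ is finite, so the product on the right-hand side is a finite product. By Lemma~\ref{bingo}, for each such $\ell$ there is an integer $n_\ell$ beyond which $\function_{x,\ell^n}(\pi_{\ell^n}M)$ is constant, with stable value $\function_{x,\ell^\infty}(\pi_{\ell^\infty}M)$ by the definition \eqref{function-ell-inf}. Taking $n$ larger than the maximum of the finitely many $n_\ell$, the right-hand side of the displayed identity equals $\prod_\ell \function_{x,\ell^\infty}(\pi_{\ell^\infty}M)$.

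Next I would invoke the remark made just after \eqref{function-ell-inf}: combining \eqref{bing} with Lemma~\ref{bingo} shows that $\function_{m^n}(M_n)$ is also eventually constant, and its stable value is by definition \eqref{function-m-inf} equal to $\function_{m^\infty}(M)$. Hence, for $n$ large enough, the left-hand side of the displayed identity equals $\function_{m^\infty}(M)$, and comparing the two sides yields \eqref{binginfty}.

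There is essentially no serious obstacle: the argument is a finite passage to the limit. The only point requiring a little care is the bookkeeping of the projection maps — checking that the $\ell$-component of $M_n$ is exactly the reduction modulo $\ell^n$ of $\pi_{\ell^\infty}M$, so that the quantity appearing in \eqref{bing} is precisely the $n$-th term of the sequence whose limit defines $\function_{x,\ell^\infty}(\pi_{\ell^\infty}M)$ — together with the trivial but essential remark that finiteness of the set of prime divisors of $m$ allows one to choose a single $n$ working for all $\ell$ simultaneously.
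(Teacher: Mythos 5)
Your argument is correct and is exactly the paper's proof, which simply says that taking the limit as $n\to\infty$ in \eqref{bing} yields \eqref{binginfty}; you have merely spelled out the (routine) details about eventual constancy from Lemma~\ref{bingo} and the finiteness of the set of primes dividing $m$.
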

\begin{proof}
Taking the limit as $n\to\infty$ in \eqref{bing} yields \eqref{binginfty}.
\end{proof}

\subsection{Partitioning the image of the $m$-adic representation}\label{sec:partition}
We view elements of $\imGal({m^\infty})$ as automorphisms of $A[m^\infty]=\bigcup_{n\geqslant1}A[m^n]$. We then classify elements $M\in \imGal({m^\infty})$ according to the group structure of $\ker(M-I)$ and according to the projection $\pi_m(M)\in\imGal({m})$.  Note that if $\ker(M-I)$ is finite, then it is a product over the primes $\ell\mid m$ of finite abelian $\ell$-groups that have at most $b_A$ cyclic components.

For every group $\fine$ of the form $\prod_\ell \fine_\ell$, where $\fine_\ell$ is a finite abelian $\ell$-group with at most $b_A$ cyclic components, we define the sets
\begin{equation}\label{MGH}
\mathcal M_{\fine}:=\{M\in \imGal(m^\infty): \, \ker \left (M-I : A[m^\infty] \to A[m^\infty]\right) \cong  \fine \}
\end{equation}
and
\begin{equation}\label{MGHx}
\mathcal M_{x,\fine}:=\{M\in \imGal({m^\infty}): \, \ker \left (M-I : A[m^\infty] \to A[m^\infty]\right) \cong  \fine ,\, \pi_{m}(M)=x \}\,.
\end{equation}
We denote by $\mathcal{M}_{\fine}(*)$ and $\mathcal{M}_{x,\fine}(*)$, respectively, the images of these sets under the reduction map $\imGal({m^\infty}) \to \imGal(*)$.
We also write
\begin{equation}\label{MathcalM}
\mathcal M := \bigcup_{\fine} \mathcal{M}_{\fine} = \bigcup_{x,\fine} \mathcal{M}_{x,\fine}\,,
\end{equation}
the union being taken over all $x\in \imGal(m)$ and over all groups $\fine=\prod_\ell \fine_\ell$ as above, up to isomorphism.

\begin{prop}\label{partition} The following  holds:
\begin{enumerate}
\item The sets $\mathcal{M}_{x,\fine}$ are measurable in $\imGal({m^\infty})$.
\item If $n>v_\ell(\exp \fine)$ for all $\ell\mid m$, then we have
$$
\mu_{\imGal(m^\infty)} (\mathcal{M}_{x,\fine})=\mu_{\imGal(m^n)}(\mathcal{M}_{x,\fine}(m^n)).
$$
\item We have $\mu_{\imGal(m^\infty)} (\mathcal{M}_{x,\fine})=0$ if and only if $\mathcal{M}_{x,\fine}=\emptyset$.
\item The set $\mathcal M$ of \eqref{MathcalM} is measurable in $\imGal({m^\infty})$.
\item If $(A/K,m)$ satisfies eventual maximal growth of the torsion fields, then we have
$$
\mu_{\imGal(m^\infty)}(\mathcal{M})=1.
$$
\end{enumerate}
\end{prop}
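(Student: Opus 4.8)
The plan is to derive parts~(1)--(4) from one observation: if $M\in\imGal(m^\infty)$ has $\ker(M-I)\cong\fine$ with $\fine$ finite, then both the isomorphism type of $\ker(M-I)$ and the projection $\pi_m(M)$ are already determined by $M\bmod m^n$ as soon as $n>v_\ell(\exp\fine)$ for every $\ell\mid m$. For $\pi_m(M)$ this is obvious; for the kernel I would work prime by prime, putting $\pi_{\ell^\infty}(M)-I$ in Smith normal form over $\Z_\ell$, say $\mathrm{diag}(\ell^{a_1},\dots,\ell^{a_{b_A}})$ with $a_i\in\{0,1,\dots\}\cup\{\infty\}$, and using that $\ker\bigl(\pi_{\ell^\infty}(M)-I:(\Q_\ell/\Z_\ell)^{b_A}\to(\Q_\ell/\Z_\ell)^{b_A}\bigr)\cong\bigoplus_i\Z/\ell^{a_i}$ is finite exactly when all $a_i<\infty$, with $v_\ell(\exp\fine)=\max_i a_i$ in that case. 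Reducing modulo $\ell^n$ replaces each $a_i$ by $\min(a_i,n)$, so for $n>\max_i a_i$ the $a_i$ — hence $\ker(\pi_{\ell^\infty}(M)-I)$ — are recovered from $M\bmod\ell^n$, and every $M'\equiv M\pmod{\ell^n}$ has the same invariants. Thus $\mathcal{M}_{x,\fine}$ is the \emph{full} preimage of $\mathcal{M}_{x,\fine}(m^n)$ under the reduction $\imGal(m^\infty)\to\imGal(m^n)$, so it is open and closed, giving~(1); that reduction is a surjective continuous homomorphism of compact groups and hence carries normalised Haar measure to normalised counting measure, so $\mu_{\imGal(m^\infty)}(\mathcal{M}_{x,\fine})=\#\mathcal{M}_{x,\fine}(m^n)/\#\imGal(m^n)=\mu_{\imGal(m^n)}(\mathcal{M}_{x,\fine}(m^n))$, giving~(2); this vanishes iff $\mathcal{M}_{x,\fine}(m^n)=\emptyset$, which — the preimage being full — happens iff $\mathcal{M}_{x,\fine}=\emptyset$, giving~(3); and since there are finitely many $x$ and countably many isomorphism classes of $\fine=\prod_{\ell\mid m}\fine_\ell$ of the prescribed shape, $\mathcal{M}=\bigcup_{x,\fine}\mathcal{M}_{x,\fine}$ is a countable disjoint union of measurable sets, giving~(4).

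For~(5), note that $\mathcal{M}$ is exactly the set of $M$ with $\ker(M-I:A[m^\infty]\to A[m^\infty])$ finite, and — by the $\ell$-part description above — this holds iff $\det\bigl(\pi_{\ell^\infty}(M)-I\bigr)\neq0$ in $\Z_\ell$ for every $\ell\mid m$; hence
\[
\imGal(m^\infty)\setminus\mathcal{M}=\bigcup_{\ell\mid m}\pi_{\ell^\infty}^{-1}(Z_\ell),\qquad Z_\ell:=\{N\in\imGal(\ell^\infty):\det(N-I)=0\}\,.
\]
Since $\pi_{\ell^\infty}\colon\imGal(m^\infty)\to\imGal(\ell^\infty)$ is a surjective continuous homomorphism of compact groups it pushes normalised Haar measure to normalised Haar measure, so it suffices to show $\mu_{\imGal(\ell^\infty)}(Z_\ell)=0$ for each $\ell\mid m$. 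First I would check that eventual maximal growth of the torsion fields for $(A/K,m)$ descends to $(A/K,\ell)$: the degrees $[K_{\ell^{-N}}:K_{\ell^{-n}}]$ are $\ell$-powers, so the fields $K_{\ell^{-N}}$ ($\ell\mid m$) are linearly disjoint over $K_{m^{-n}}$, whence $[K_{m^{-N}}:K_{m^{-n}}]=\prod_{\ell\mid m}[K_{\ell^{-N}}:K_{\ell^{-N}}\cap K_{m^{-n}}]$, and comparing $\ell$-parts with the general bound $[K_{\ell^{-N}}:K_{\ell^{-n}}]=O(\ell^{d_A(N-n)})$ (which holds since the $\ell$-adic image has Zariski closure of dimension $d_A$) forces $[K_{\ell^{-N}}:K_{\ell^{-n}}]=\ell^{d_A(N-n)}$ for all large $N\geqslant n$. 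This maximal growth forces the closed (hence, by Lazard, $\ell$-adic analytic) subgroup $\imGal(\ell^\infty)\subseteq\GL_{b_A}(\Z_\ell)$ to have dimension exactly $d_A$ — its $n$-th congruence subgroup has index $\asymp\ell^{d_A n}$ — and therefore (comparing Lie algebras) to be open in the $\Q_\ell$-points of its Zariski closure $\mathcal{G}_\ell$, which also has dimension $d_A$. On each of the finitely many connected components of $\mathcal{G}_\ell$ meeting $\imGal(\ell^\infty)$, the regular function $N\mapsto\det(N-I)$ is not identically zero: its intersection with $\imGal(\ell^\infty)$ is a nonempty open set, which by the Chebotarev density theorem contains a Frobenius $\Frob_\p$ at a prime of good reduction, and $\det(\Frob_\p-I)\neq0$, as it equals up to sign the number of $\F_\p$-points of the reduction (of the semiabelian part of~$A$). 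So $Z_\ell$ is, component by component, the zero set of a nonzero $\ell$-adic analytic function, hence has measure zero in $\imGal(\ell^\infty)$; summing over components and over $\ell\mid m$ gives $\mu_{\imGal(m^\infty)}(\imGal(m^\infty)\setminus\mathcal{M})=0$. This is the composite-field version of the corresponding argument in \cite{LombardoPerucca}.

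The main obstacle is this last measure-zero claim: one has to convert the purely numerical maximal-growth hypothesis into genuine $\ell$-adic analytic structure on $\imGal(\ell^\infty)$ — full-dimensionality, hence openness in the $\Q_\ell$-points of its Zariski closure — in order to apply the fact that the zero set of a nonzero $\ell$-adic analytic function has measure zero. The descent of maximal growth from $m$ to the individual primes $\ell$, and parts~(1)--(4) themselves, are routine by comparison.
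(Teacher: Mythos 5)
Your proof is correct and is, in substance, the argument that the paper delegates to the citation of \cite[Lemma~21]{LombardoPerucca}: parts (1)--(4) are the same clopen full-preimage observation (via the elementary divisors of $M-I$), and for part (5) you reduce to one prime at a time — pushforward of Haar measure along the projection to $\imGal(\ell^\infty)$ together with the easy descent of the maximal-growth hypothesis to each $\ell$ — and then run the same ingredients as in loc.\ cit., namely openness of $\imGal(\ell^\infty)$ in the $\Q_\ell$-points of its Zariski closure and nonvanishing of $\det(\Frob_{\p}-I)$ via Chebotarev and Weil numbers. The points you treat tersely (the stabilisation step needed for exact maximal growth at each $\ell$ — though only the lower bound $[K_{\ell^{-N}}:K_{\ell^{-n}}]\geqslant\ell^{d_A(N-n)}$ is actually used for the dimension count — and the passage from Zariski components to $\ell$-adic analytic pieces) are routine and do not constitute gaps.
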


\begin{proof}
This is proved as in \cite[Lemma 21]{LombardoPerucca}.
\end{proof}

\section{The density as an integral}

By Theorem \ref{thm:interpretation}, computing $\Dens_m(\alpha)$ comes down to computing the Haar measure of $S_\alpha$ in $\Gal(K_{m^{-\infty} \alpha}/K)$.
The generalisation of \cite[Remark 17]{LombardoPerucca} to the composite case gives
$$
S_\alpha=\{(t,M)\in \Gal(K_{m^{-\infty} \alpha}/K)\;:\; M\in \imGal({m^\infty})\text{ and }  t\in \im (M-I)\}\,.
$$
In view of \eqref{MathcalM}, we consider the sets
\begin{equation}
S_{x,\fine}:= \{(t,M)\in \Gal(K_{m^{-\infty} \alpha}/K)\, :\; M\in \mathcal M_{x,\fine} \text{ and } t\in \im (M-I)\}\,.
\end{equation}

By Proposition~\ref{partition}, the set $S_\alpha$ is the disjoint union of the sets $S_{x,\fine}$ up to a set of measure $0$.
To see that the Haar measure of $S_{x,\fine}$ is well defined and to compute it, we define for every $n\geqslant 1$ the set
\begin{equation}
S_{x,\fine,m^n}=\{(t, M)\in \Gal(K_{m^{-n}\alpha}/K)\, :  M \in \mathcal{M}_{x,\fine}(m^n) \text{ and } t\in \im(M-I)\}\,.
\end{equation}

\begin{prop}
Suppose $n>n_0$ and $n>\max_\ell \{v_\ell(\exp \fine)\}$ for every $\ell$, where $n_0$ is as in Definition \ref{defi-conditions}. Then the set $S_{x,\fine,m^n}$ is the image of $S_{x,\fine}$ under the projection to $\Gal(K_{m^{-n}\alpha}/K)$.
\end{prop}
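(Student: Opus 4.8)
The plan is to prove the two inclusions between $S_{x,\fine,m^n}$ and the image $\pi(S_{x,\fine})$ of $S_{x,\fine}$ under the projection $\pi\colon \Gal(K_{m^{-\infty}\alpha}/K)\to\Gal(K_{m^{-n}\alpha}/K)$. The easy inclusion is $\pi(S_{x,\fine})\subseteq S_{x,\fine,m^n}$: if $(t,M)\in S_{x,\fine}$ with $M\in\mathcal M_{x,\fine}$ and $t\in\im(M-I)$ on $A[m^\infty]$, then reducing modulo $m^n$ gives an element $(\bar t,\bar M)\in\Gal(K_{m^{-n}\alpha}/K)$ with $\bar M\in\mathcal M_{x,\fine}(m^n)$ by definition of the reduction map, and $\bar t\in\im(\bar M-I)$ on $A[m^n]$ since reduction is compatible with applying $M-I$. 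The projection $\pi_m(\bar M)=x$ is preserved because $n>v_\ell(\exp\fine)$ guarantees $K_{m^{-1}}\subseteq K_{m^{-n}}$ (as $m$ is squarefree, $K_{m^{-1}}=K(A[m])$, so we really just need $n\ge 1$ here, but the hypotheses are in force anyway). This direction uses nothing beyond functoriality.

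For the reverse inclusion $S_{x,\fine,m^n}\subseteq\pi(S_{x,\fine})$, start with $(t,M)\in S_{x,\fine,m^n}$, so $M\in\mathcal M_{x,\fine}(m^n)$ and $t\in\im(M-I)$ on $A[m^n]$. By definition of $\mathcal M_{x,\fine}(m^n)$ as the image of $\mathcal M_{x,\fine}$ under reduction, lift $M$ to some $\widetilde M\in\mathcal M_{x,\fine}\subseteq\imGal(m^\infty)$; then $\ker(\widetilde M-I)\cong\fine$ on $A[m^\infty]$ and $\pi_m(\widetilde M)=x$. The point is now to lift $t$. Since $n>\max_\ell v_\ell(\exp\fine)$, the restriction $\widetilde M-I$ induces an isomorphism $A[m^\infty]/A[m^n]\xrightarrow{\sim}A[m^\infty]/A[m^n]$ — indeed on each $A[\ell^\infty]$, the kernel of $\widetilde M-I$ lies inside $A[\ell^n]$, so $\widetilde M-I$ is injective on $A[\ell^\infty]/A[\ell^n]\cong(\Q_\ell/\Z_\ell)^{b_A}$, hence surjective as this group is divisible and co-finitely-generated (equivalently, $(\widetilde M-I)$ acts invertibly on the free $\Z_\ell$-module $T_\ell A$ after inverting its elementary divisors, all of which divide $\ell^n$). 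Consequently $\im(\widetilde M-I)$ maps onto $A[m^\infty]/(\text{something contained in }A[m^n])$, and more precisely $\im(\widetilde M-I)+A[m^n]=A[m^\infty]$ while $\im(\widetilde M-I)\cap A[m^n]=\im(M-I)$ on $A[m^n]$. So any lift $\widetilde t\in A[m^\infty]$ of $t\in A[m^n]$ can be adjusted by an element of $\im(\widetilde M-I)$... wait — more directly: since $t\in\im(M-I)\subseteq A[m^n]$ and the latter equals $\im(\widetilde M-I)\cap A[m^n]$, we can choose $\widetilde t\in\im(\widetilde M-I)$ with $\widetilde t\equiv t\pmod{A[m^n]}$... but we need $\widetilde t$ reducing to $t$ exactly. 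This is where a short diagram chase is needed: pick $s\in A[m^n]$ with $(M-I)s=t$; lift $s$ to $\widetilde s\in A[m^\infty]$ arbitrarily; set $\widetilde t=(\widetilde M-I)\widetilde s$. Then $\widetilde t\in\im(\widetilde M-I)$ and $\widetilde t$ reduces mod $m^n$ to $(M-I)s=t$, using that $\widetilde M$ reduces to $M$. Thus $(\widetilde t,\widetilde M)\in\Gal(K_{m^{-\infty}\alpha}/K)$ (it lies in the group because the arboreal representation is surjective onto its image and $\widetilde M\in\imGal(m^\infty)$ together with $\widetilde t\in\im(\widetilde M-I)$ characterises membership in $S_\alpha$ by the description of $S_\alpha$ recalled above) and lies in $S_{x,\fine}$, and projects to $(t,M)$.

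The main obstacle is the lifting of $t$ together with verifying that the lifted pair $(\widetilde t,\widetilde M)$ genuinely lies in $\Gal(K_{m^{-\infty}\alpha}/K)$ and not merely in the ambient semidirect product $T_mA\rtimes\Aut(T_mA)$. For the first part, the hypothesis $n>\max_\ell v_\ell(\exp\fine)$ is exactly what makes $\widetilde M-I$ act surjectively on $A[m^\infty]$ modulo $A[m^n]$, so the construction $\widetilde t=(\widetilde M-I)\widetilde s$ works; one should double-check the edge case where $\fine$ is infinite (i.e. $\ker(\widetilde M-I)$ has a positive-rank part), in which case $\function$-values are zero and one must argue $S_{x,\fine}$ and $S_{x,\fine,m^n}$ are both empty, or handle it via the convention that such $\fine$ do not arise among those with "at most $b_A$ cyclic components" of finite exponent — this matches the setup in Section~\ref{sec:partition}. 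For the second part, one invokes the characterisation $S_\alpha=\{(t,M):M\in\imGal(m^\infty),\ t\in\im(M-I)\}$ stated just before the proposition: since $\widetilde M\in\mathcal M_{x,\fine}\subseteq\imGal(m^\infty)$ and $\widetilde t\in\im(\widetilde M-I)$, the pair lies in $S_\alpha\subseteq\Gal(K_{m^{-\infty}\alpha}/K)$, and since $\ker(\widetilde M-I)\cong\fine$ and $\pi_m(\widetilde M)=x$ it lies in $S_{x,\fine}$. The conditions $n>n_0$ and $n>\max_\ell v_\ell(\exp\fine)$ are used respectively to ensure we are in the regime of eventual maximal growth (so $\function_{m^n}$, Galois degrees, etc. behave) and to ensure the surjectivity of $\widetilde M-I$ modulo $A[m^n]$; I expect the write-up to be short once these two points are isolated.
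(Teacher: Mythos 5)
Your easy inclusion is fine, and your lifting of the translation component (pick $s$ with $(M-I)s=t$, lift $s$, apply $\widetilde M-I$) is essentially the same device the paper uses. But there is a genuine gap at the decisive point: you justify that the constructed pair $(\widetilde t,\widetilde M)$ lies in $\Gal(K_{m^{-\infty}\alpha}/K)$ by appealing to the description $S_\alpha=\{(t,M)\in\Gal(K_{m^{-\infty}\alpha}/K): M\in\imGal(m^\infty),\ t\in\im(M-I)\}$ as if it were a membership criterion for the Galois group. It is not: it describes a subset \emph{of} the Galois group, and $\Gal(K_{m^{-\infty}\alpha}/K)$ is in general a \emph{proper} subgroup of $T_mA\rtimes\imGal(m^\infty)$ --- its intersection with $T_mA\times\{I\}$ is the Kummer part, whose failure to be all of $T_mA$ is exactly what the constant $\failure_m$ measures. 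So knowing $\widetilde M\in\mathcal M_{x,\fine}\subseteq\imGal(m^\infty)$ and $\widetilde t\in\im(\widetilde M-I)$ does not by itself put $(\widetilde t,\widetilde M)$ in the Galois group; with your choices ($\widetilde M$ an arbitrary lift inside $\mathcal M_{x,\fine}$, $\widetilde s$ an arbitrary lift of $s$) there is no reason the pair is realised by an actual automorphism.

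The paper closes this gap differently, and this is where the hypothesis $n>n_0$ does real work (in your write-up its role is left vague). One first lifts the whole element $(t_{m^n},M_{m^n})$ \emph{inside the Galois group}, using surjectivity of $\Gal(K_{m^{-\infty}\alpha}/K)\to\Gal(K_{m^{-n}\alpha}/K)$; the condition $n>\max_\ell v_\ell(\exp\fine)$ then forces $\ker(M-I)\cong\fine$ for this lift, and $\pi_m(M)=x$ is automatic. The only defect is that $t$ need not lie in $\im(M-I)$; one replaces $t$ by $(M-I)\tau$ for a lift $\tau$ of a preimage of $t_{m^n}$, and the replacement stays in the Galois group because the difference lies in $m^nT_mA$ and, since $n>n_0$, eventual maximal growth of the Kummer extensions guarantees $m^nT_mA\times\{I\}\subseteq\Gal(K_{m^{-\infty}\alpha}/K)$. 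Your argument lifts $M$ and $t$ separately and never invokes this containment, so the Galois membership is unproven; patching it essentially reproduces the paper's proof. (Minor points: your worry about ``infinite $\fine$'' is vacuous, since $\fine$ is by definition finite; and the translation components live in $T_mA$, not in $A[m^\infty]$, though your construction transposes to $T_mA$ without difficulty.)
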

\begin{proof}
The set $S_{x,\fine,m^n}$ clearly contains the reduction modulo $m^n$ of $S_{x,\fine}$.
To prove the other inclusion, consider $(t_{m^n}, M_{m^n})\in S_{x,\fine,m^n}$ and a lift
$(t, M)\in \Gal(K_{m^{-\infty} \alpha}/K)$. Since $n$ is sufficiently large with respect to $F$, we have $\ker(M-I)\cong F$. Clearly $M_{m^n}$ and $M$ have the same projection $x\in \imGal(m)$. To conclude, it suffices to ensure $t\in \im (M-I)$.
Take $\tau_{m^n} \in A[m^n]$ satisfying $(M_{m^n}-I)(\tau_{m^n})=t_{m^n}$, and some lift $\tau$ of $\tau_{m^n}$ to $ T_m(A)$: we may replace $t$ by $(M-I)\tau$ because the difference is in $m^n T_m(A)$ and since $n>n_0$ we know that $\Gal(K_{m^{-\infty}\alpha}/K)$ contains $m^n T_m(A)\times \{I\}$.
\end{proof}

\begin{thm}
We have
\begin{equation}\label{Sxfine}
\mu(S_{x,\fine})= \frac{\failure_m}{\#\fine} \int_{\mathcal{M}_{x,\fine}} \function_{m^\infty}(M) \,\,  d\mu_{\imGal({m^\infty})} (M)
\end{equation}
where $\failure_m$ is the constant of \eqref{defi-failure} and $\function_{m^\infty}$ is as in \eqref{function-m-inf}.
\end{thm}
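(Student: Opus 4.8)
The plan is to compute $\mu(S_{x,\fine})$ as a limit of normalised point counts at the finite levels $m^n$ and then to identify that limit with the right-hand side of \eqref{Sxfine}. Throughout, fix an integer $n_1$ large enough that the preceding proposition applies for all $n\geqslant n_1$ (so $n_1>n_0$ and $n_1>v_\ell(\exp\fine)$ for every $\ell\mid m$), and write $\pi_n$ for the restriction map $\Gal(K_{m^{-\infty}\alpha}/K)\to\Gal(K_{m^{-n}\alpha}/K)$. The first step is to show that
\[
\mu(S_{x,\fine})=\lim_{n\to\infty}\frac{\#S_{x,\fine,m^n}}{[K_{m^{-n}\alpha}:K]}\,.
\]
By the preceding proposition, $\pi_n(S_{x,\fine})=S_{x,\fine,m^n}$ for $n\geqslant n_1$, so $S_{x,\fine}\subseteq B_n:=\pi_n^{-1}(S_{x,\fine,m^n})$; these clopen sets form a decreasing sequence because the sets $\mathcal{M}_{x,\fine}(m^n)$ and the conditions $t\in\im(M-I)$ are compatible under reduction modulo powers of $m$. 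For the reverse inclusion $\bigcap_{n\geqslant n_1}B_n\subseteq S_{x,\fine}$: given $(t,M)$ lying in every $B_n$, pick for each large $n$ an $M'\in\mathcal{M}_{x,\fine}$ with $\pi_n(M')=\pi_n(M)$; then $\ker(M-I)\cap A[m^n]=\ker(M'-I)\cap A[m^n]\cong\fine$, so these nested finite groups stabilise and $\ker(M-I\colon A[m^\infty]\to A[m^\infty])\cong\fine$, giving $M\in\mathcal{M}_{x,\fine}$; and since $t\bmod m^n$ lies in the reduction of the compact set $\im(M-I\colon T_m A\to T_m A)$ for all $n$, a compactness argument gives $t\in\im(M-I)$. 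Hence $\bigcap_{n\geqslant n_1}B_n=S_{x,\fine}$, and since $\mu(B_n)=\#S_{x,\fine,m^n}/[K_{m^{-n}\alpha}:K]$, continuity of the normalised Haar measure from above yields the displayed identity.

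Next I would evaluate both sides of this fraction at level $m^n$ for $n\geqslant n_1$. For the denominator, $[K_{m^{-n}\alpha}:K]=[K_{m^{-n}\alpha}:K_{m^{-n}}]\cdot[K_{m^{-n}}:K]=(m^{b_A n}/\failure_m)\cdot\#\imGal(m^n)$, using \eqref{defi-failure} (valid for $n\geqslant n_0$) and the equality $[K_{m^{-n}}:K]=\#\imGal(m^n)$. For the numerator, grouping the pairs $(t,M)\in S_{x,\fine,m^n}$ according to $M$ and invoking Definition \ref{WuwuM} gives $\#S_{x,\fine,m^n}=\sum_{M\in\mathcal{M}_{x,\fine}(m^n)}\#(\im(M-I)\cap\set_{m^n}(M))=\sum_{M\in\mathcal{M}_{x,\fine}(m^n)}\function_{m^n}(M)\cdot\#\im(M-I)$; and since $n>v_\ell(\exp\fine)$ for every $\ell$, the kernel of $M-I$ on $A[m^n]$ equals its kernel on $A[m^\infty]$, which is isomorphic to $\fine$, so $\#\im(M-I)=m^{b_A n}/\#\fine$. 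Dividing,
\[
\frac{\#S_{x,\fine,m^n}}{[K_{m^{-n}\alpha}:K]}=\frac{\failure_m}{\#\fine}\cdot\frac{1}{\#\imGal(m^n)}\sum_{M\in\mathcal{M}_{x,\fine}(m^n)}\function_{m^n}(M)=\frac{\failure_m}{\#\fine}\int_{\mathcal{M}_{x,\fine}(m^n)}\function_{m^n}\,d\mu_{\imGal(m^n)}\,.
\]

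Finally I would let $n\to\infty$. Pulling everything back to $\imGal({m^\infty})$ along the measure-preserving maps $\pi_n$, the indicator functions of the preimages of $\mathcal{M}_{x,\fine}(m^n)$ decrease pointwise to the indicator of $\mathcal{M}_{x,\fine}$ (by the stabilisation argument of the first step, together with Proposition \ref{partition} for measurability), while $\function_{m^n}\to\function_{m^\infty}$ pointwise by Lemma \ref{bingo} and \eqref{function-m-inf}; since $0\leqslant\function_{m^n}\leqslant1$, dominated convergence identifies the limit of the last display with $(\failure_m/\#\fine)\int_{\mathcal{M}_{x,\fine}}\function_{m^\infty}\,d\mu_{\imGal({m^\infty})}$, which together with the first step proves \eqref{Sxfine}. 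I expect the main obstacle to be the first step: verifying that $S_{x,\fine}$ is exactly the decreasing intersection of the cylinder sets $\pi_n^{-1}(S_{x,\fine,m^n})$, so that its Haar measure equals the limit of the normalised cardinalities — this is where the finite-level data must be reconciled with the profinite conditions $\ker(M-I)\cong\fine$ and $t\in\im(M-I)$, through the stabilisation of nested kernels and a compactness argument for the image condition, whereas the ensuing counting and the exchange of limit and integral are routine.
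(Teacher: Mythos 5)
Your proposal is correct and follows essentially the same route as the paper: count the pairs $(t,M)$ in $S_{x,\fine,m^n}$ via $\function_{m^n}$ and $\#\im(M-I)=m^{b_A n}/\#\fine$, divide by $[K_{m^{-n}\alpha}:K]$ using \eqref{defi-failure}, and pass to the limit to get the integral over $\mathcal{M}_{x,\fine}$. Your cylinder-set/compactness argument and the dominated-convergence step merely make explicit what the paper compresses into the monotonicity remark and the observation that $\mathcal{M}_{x,\fine}$ is the exact preimage of $\mathcal{M}_{x,\fine}(m^n)$ for large $n$.
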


\begin{proof}
Choose $n$ large enough so that $n>n_0$ and $n>\max_\ell \{v_\ell(\exp \fine)\}$ for every $\ell$, where $n_0$ is as in Definition \ref{defi-conditions}.
By \eqref{WuM}, we can write
\[
\#S_{x,\fine, m^n}  = \sum_{M\in \mathcal M_{x,\fine}(m^n)}\# \bigl(\im(M-I) \cap \set_{m^n}(M)\bigr)\,.
\]
By \eqref{wuM}, we can express the summand as
$$
{\#\im(M-I)\cdot \function_{m^n}(M) } = \frac{\function_{m^n}(M)\cdot {m^{bn}}}{\#\fine}\,,
$$
so from \eqref{defi-failure} we deduce
$$
\frac{\#S_{x,\fine, m^n}}{\# \Gal({K_{m^{-n}\alpha}/K)}}=   \frac{1}{\#\imGal({m^n})} \sum_{M\in \mathcal M_{x,\fine}(m^n)}   \frac{\failure_m}{\#\fine}\cdot {\function_{m^n}(M)}\,.
$$
By \eqref{refomulation-Defi} the left-hand side is a non-increasing function of $n$, and therefore it admits a limit for $n\rightarrow \infty$, which is  $\mu(S_{x,\fine})$.
The right-hand side is an integral over $\mathcal M_{x,\fine}(m^n)$ with respect to the normalized counting measure of ${\imGal}({m^n})$, and the matrices in $\mathcal M_{x,\fine}$ are exactly the matrices in $\imGal({m^\infty})$ whose reduction modulo $m^n$ lies in $\mathcal M_{x,\fine}(m^n)$. Taking the limit in $n$ we thus find the formula in the statement.
\end{proof}

\begin{thm}\label{madicintegral}
We have
\begin{equation}\label{density-m111}
\Dens_m(\alpha) = \failure_m \int_{\imGal({m^\infty}) } \frac{\function_{m^\infty}(M)}{\# \ker (M-I)} \,\,  d\mu_{\imGal({m^\infty}) }(M)
\end{equation}
where the function $\function_{m^\infty}$ is as in \eqref{function-m-inf}, and the constant $\failure_m$ is as in \eqref{defi-failure}.
\end{thm}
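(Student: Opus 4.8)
The plan is to obtain \eqref{density-m111} by summing the formula \eqref{Sxfine} over all pairs $(x,\fine)$. By Theorem \ref{thm:interpretation} we have $\Dens_m(\alpha)=\mu(S_\alpha)$, and by Proposition \ref{partition} the set $S_\alpha$ is, up to a set of measure zero, the disjoint union of the sets $S_{x,\fine}$. The index set here is countable: $x$ runs over the finite group $\imGal(m)$, and $\fine$ over the countably many isomorphism classes of groups $\prod_\ell\fine_\ell$ with $\fine_\ell$ a finite abelian $\ell$-group having at most $b_A$ cyclic components. Hence countable additivity of the Haar measure gives $\Dens_m(\alpha)=\sum_{x,\fine}\mu(S_{x,\fine})$.

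Next I would rewrite each term using \eqref{Sxfine}. The point is that on $\mathcal M_{x,\fine}$ one has $\ker(M-I)\cong\fine$, so $\#\ker(M-I)=\#\fine$ is constant there; thus $\frac{\failure_m}{\#\fine}\function_{m^\infty}(M)=\failure_m\,\frac{\function_{m^\infty}(M)}{\#\ker(M-I)}$ on the domain of integration, and
\[
\mu(S_{x,\fine})=\int_{\mathcal M_{x,\fine}}\failure_m\,\frac{\function_{m^\infty}(M)}{\#\ker(M-I)}\,d\mu_{\imGal(m^\infty)}(M).
\]
Now every summand is an integral of the \emph{same} function $M\mapsto\failure_m\,\function_{m^\infty}(M)/\#\ker(M-I)$, the only difference being the domain. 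Since the sets $\mathcal M_{x,\fine}$ are pairwise disjoint measurable subsets of $\imGal(m^\infty)$ with union $\mathcal M$ (Proposition \ref{partition}), countable additivity of the integral yields $\sum_{x,\fine}\mu(S_{x,\fine})=\failure_m\int_{\mathcal M}\frac{\function_{m^\infty}(M)}{\#\ker(M-I)}\,d\mu_{\imGal(m^\infty)}(M)$.

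Finally, I would remove the restriction to $\mathcal M$. The running hypothesis (eventual maximal growth of the Kummer extensions) implies eventual maximal growth of the torsion fields, so Proposition \ref{partition}(5) gives $\mu_{\imGal(m^\infty)}(\mathcal M)=1$. On $\mathcal M$ the integrand is bounded — we have $0\le\function_{m^\infty}(M)\le1$ by its very definition and $\#\ker(M-I)\ge1$, so the integrand is at most $\failure_m$ — hence the complement of $\mathcal M$ contributes nothing to the integral and the domain may be enlarged to all of $\imGal(m^\infty)$. This gives \eqref{density-m111}.

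The argument is essentially formal; the only steps deserving attention are the two passages from a countable sum to a measure, respectively an integral. Both are unproblematic because one works with genuine countably additive measures and a bounded nonnegative integrand, so no convergence subtlety arises — one merely has to record, as Proposition \ref{partition} does, that the pieces indexed by $(x,\fine)$ are measurable, pairwise disjoint, and countable in number.
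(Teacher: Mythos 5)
Your proposal is correct and follows essentially the same route as the paper: decompose $S_\alpha$ into the sets $S_{x,\fine}$ via Theorem \ref{thm:interpretation} and Proposition \ref{partition}, apply \eqref{Sxfine} to each piece, use $\#\ker(M-I)=\#\fine$ on $\mathcal M_{x,\fine}$, and pass from $\mathcal M$ to all of $\imGal(m^\infty)$ since $\mathcal M$ has full measure (the paper records this last rewriting as the equivalence of \eqref{density-m111} with \eqref{density-m1}). You merely spell out the countable-additivity and boundedness points that the paper leaves implicit.
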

Recalling the sets $\mathcal{M}_{\fine}$ from \eqref{MGH}, and that by Proposition~\ref{partition} their union has measure $1$ in $\imGal(m^\infty)$, we can rewrite
\eqref{density-m111} as
\begin{equation}\label{density-m1}
\Dens_m(\alpha) = \failure_m \sum_\fine \frac{1}{\#\fine} \int_{\mathcal{M}_{\fine}} \function_{m^\infty}(M) \,\,  d\mu_{\imGal({m^\infty})}(M)
\end{equation}
where $F$ varies over the products over the primes $\ell\mid m$ of finite abelian $\ell$-groups with at most $b_A$ cyclic components.

\begin{proof}
We have just shown that we may prove \eqref{density-m1} instead. Notice  that we have $\mathcal{M}_{\fine}=\cup_{x} \mathcal{M}_{x,\fine}$.
By Theorem \ref{thm:interpretation} we may write $\Dens_m(\alpha)= \mu(S_{\alpha})= \sum_{x,F}\mu(S_{x,\fine})$ and then it suffices to apply \eqref{Sxfine}.
\end{proof}

\begin{cor}[{\cite[Theorem 1 and Remark 25]{LombardoPerucca}}]
In the special case $m=\ell$ we have
\begin{equation}\label{density-ell1}
\Dens_\ell(\alpha) = \failure_\ell \sum_\fine \frac{1}{\#\fine} \int_{\mathcal{M}_{\fine}} \function_{\ell^\infty}(M) \,\, d\mu_{\imGal({\ell^\infty})}(M)
\end{equation}
where $F$ varies among the finite abelian $\ell$-groups with at most $b_A$ cyclic components.
We can also write
\begin{equation}\label{density-ell2}
\Dens_\ell(\alpha) = \failure_\ell \int_{\imGal(\ell^\infty)} \frac{\function_{\ell^\infty}(M)}{\# \ker (M-I)}\,\, d\mu_{\imGal({\ell^\infty})}(M)\,.
\end{equation}
\end{cor}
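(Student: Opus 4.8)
The plan is to obtain this statement as the special case $m=\ell$ of Theorem~\ref{madicintegral} together with its reformulation~\eqref{density-m1}. First I would note that when the fixed square-free integer $m$ equals a single prime~$\ell$, the index set over which $\ell$ ranges in Section~\ref{arboreal} has exactly one element, so that every group of the form $\prod_\ell \fine_\ell$ with $\fine_\ell$ a finite abelian $\ell$-group having at most $b_A$ cyclic components is simply a finite abelian $\ell$-group with at most $b_A$ cyclic components. Likewise $T_m A=T_\ell A$, $\imGal(m^\infty)=\imGal(\ell^\infty)$, $\function_{m^\infty}=\function_{\ell^\infty}$ and $\failure_m=\failure_\ell$, so that~\eqref{density-m111} becomes~\eqref{density-ell2} and~\eqref{density-m1} becomes~\eqref{density-ell1} verbatim. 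Note that the sum over $x\in\imGal(m)$ present in the composite case is already absorbed into the sets $\mathcal{M}_\fine=\bigcup_x\mathcal{M}_{x,\fine}$ of Proposition~\ref{partition}, so no separate bookkeeping in~$x$ is needed here.

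Before invoking Theorem~\ref{madicintegral} I would check that its running hypothesis holds: by the standing assumption we work under eventual maximal growth of the Kummer extensions, and for $m=\ell$ prime this is precisely the hypothesis under which \cite[Theorem~1]{LombardoPerucca} and the formula of \cite[Remark~25]{LombardoPerucca} are stated, so nothing extra is required. It then remains only to match the two displayed formulas with those of \cite{LombardoPerucca}: one confirms that the constant $\failure_\ell$ of~\eqref{defi-failure} agrees with the corresponding constant there, that the counting function $\function_{\ell^\infty}$ of~\eqref{function-ell-inf} is the same as the one used in \cite[Theorem~1]{LombardoPerucca}, and that the partition of $\imGal(\ell^\infty)$ into the measurable sets $\mathcal{M}_\fine$ coincides with the one in \cite[Lemma~21]{LombardoPerucca}. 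These identifications are a routine unravelling of notation.

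The only step demanding any genuine care is this last notational reconciliation with \cite{LombardoPerucca}: one must make sure that the definitions of $\function$ in Definition~\ref{WuwuM} and of $\failure_\ell$ in~\eqref{defi-failure} have been set up so that the resulting integrals are literally those appearing in the cited paper, rather than merely equal to them after a further rescaling. Since the preceding section has already derived~\eqref{density-m111} and~\eqref{density-m1} from scratch in the general case, and the present corollary is explicitly flagged as \cite[Theorem~1 and Remark~25]{LombardoPerucca}, I expect this reconciliation to be immediate, and the corollary can then be recorded simply as the case $m=\ell$ of Theorem~\ref{madicintegral}.
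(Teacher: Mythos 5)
Your proposal is correct and matches the paper's own proof, which simply specializes the notation of \eqref{density-m111} and \eqref{density-m1} to the case $m=\ell$. The extra remarks about reconciling notation with \cite{LombardoPerucca} are harmless but not needed, since the corollary follows directly from Theorem~\ref{madicintegral} as you say.
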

\begin{proof}
It suffices to specialize the notation of \eqref{density-m111} and  \eqref{density-m1}  to $m=\ell$.
\end{proof}

Notice that we have $\#\ker (M-I)=\ell^{v_\ell(\det(M-I))}$ for every $M\in \imGal(\ell^\infty)$.

\begin{cor}\label{corproduct}
If the fields $K_{\ell^{-\infty}\alpha}$ are linearly disjoint over $K$ (this happens for example if $K_{m^{-1}}=K$, or more generally if the degree $[K_{\ell^{-1}}:K]$ is a power of $\ell$ for each $\ell$), then we have
\begin{equation}
\Dens_m(\alpha) = \prod_{\ell} \Dens_\ell(\alpha).
\end{equation}
\end{cor}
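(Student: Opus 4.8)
The plan is to exploit the integral formula \eqref{density-m111} for $\Dens_m(\alpha)$ and the factorisation results established earlier, reducing to an application of Proposition \ref{prop:integral}. First I would record the key consequence of the linear disjointness hypothesis: since the fields $K_{\ell^{-\infty}\alpha}$ are linearly disjoint over $K$, the Galois group $\Gal(K_{m^{-\infty}\alpha}/K)$ is the product $\prod_\ell\Gal(K_{\ell^{-\infty}\alpha}/K)$, and in particular $\imGal(m^\infty)=\prod_\ell\imGal(\ell^\infty)$ and $\imGal(m)=\prod_\ell\imGal(\ell)$. Consequently the projection $x=\pi_m(M)$ decomposes as $x=\prod_\ell x_\ell$ with $x_\ell=\pi_\ell(M)$, and the normalised Haar measure on $\imGal(m^\infty)$ is the product of the $\mu_{\imGal(\ell^\infty)}$. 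Under this hypothesis the group $\imGal(m^\infty)$ already plays the role of the subgroup $H\subseteq G'$ in Proposition \ref{prop:integral} (there is no need to pass to a finite-index pro-$\ell$ subgroup to see the product structure, though one is free to do so).

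Next I would treat the integrand. Write $M=\prod_\ell\pi_{\ell^\infty}M$. We have $\ker(M-I)\cong\prod_\ell\ker(\pi_{\ell^\infty}M-I)$, so $\#\ker(M-I)=\prod_\ell\#\ker(\pi_{\ell^\infty}M-I)$, and this factor is a genuine function of the $\ell$-component alone. For the numerator, formula \eqref{binginfty} gives $\function_{m^\infty}(M)=\prod_\ell\function_{x,\ell^\infty}(\pi_{\ell^\infty}M)$; the one subtlety is that each factor $\function_{x,\ell^\infty}(\pi_{\ell^\infty}M)$ a priori depends on the full $x$, not just on $x_\ell$. But under linear disjointness the inclusion $\iota_{\ell^n}$ of Definition \ref{WuwuV} lands in the subgroup where the $\imGal(m)$-coordinate is determined by its $\ell$-part together with the remaining parts independently; more precisely, since $K_{\ell^{-n}\alpha}K_{m^{-1}}$ is the compositum of $K_{\ell^{-n}\alpha}$ with the linearly disjoint field $K_{m^{-1}}=\prod_{\ell'}K_{\ell'^{-1}}$, the condition $(\tau,V,x)\in\im\iota_{\ell^n}$ only constrains $\tau$, $V$ and the $\ell$-component $x_\ell$ of $x$. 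Hence $\set_{x,\ell^n}(V)=\set_{x_\ell,\ell^n}(V)$, and therefore $\function_{x,\ell^\infty}(\pi_{\ell^\infty}M)=\function_{x_\ell,\ell^\infty}(\pi_{\ell^\infty}M)$ depends only on the $\ell$-component. This is the step I expect to require the most care: one must check that passing to the compositum with $K_{m^{-1}}$ does not entangle the different primes, which is exactly where the disjointness hypothesis (or its stated sufficient conditions on the degrees $[K_{\ell^{-1}}:K]$) is used.

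Putting this together, over $\imGal(m^\infty)=\prod_\ell\imGal(\ell^\infty)$ the integrand of \eqref{density-m111} factors as
\[
\frac{\function_{m^\infty}(M)}{\#\ker(M-I)}=\prod_\ell\frac{\function_{x_\ell,\ell^\infty}(\pi_{\ell^\infty}M)}{\#\ker(\pi_{\ell^\infty}M-I)}=:\prod_\ell f_\ell(\pi_{\ell^\infty}M),
\]
where each $f_\ell$ is a locally constant integrable function on $\imGal(\ell^\infty)$ (local constancy follows from Lemma \ref{bingo} together with Proposition \ref{partition}(2), which show the relevant data stabilise modulo a fixed power of $\ell$). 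Applying Proposition \ref{prop:integral} with $H=\imGal(m^\infty)$, $H_\ell=\imGal(\ell^\infty)$ and $f=\prod_\ell f_\ell$ yields
\[
\int_{\imGal(m^\infty)}\frac{\function_{m^\infty}(M)}{\#\ker(M-I)}\,d\mu_{\imGal(m^\infty)}(M)=\prod_\ell\int_{\imGal(\ell^\infty)}\frac{\function_{\ell^\infty}(M)}{\#\ker(M-I)}\,d\mu_{\imGal(\ell^\infty)}(M).
\]
Finally, since the Kummer degrees also factor under linear disjointness, the constant $\failure_m$ of \eqref{defi-failure} satisfies $\failure_m=\prod_\ell\failure_\ell$ (using $[K_{m^{-n}\alpha}:K_{m^{-n}}]=\prod_\ell[K_{\ell^{-n}\alpha}:K_{\ell^{-n}}]$ and $m^{b_An}=\prod_\ell\ell^{b_An}$ for $m$ square-free). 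Multiplying through and comparing with \eqref{density-ell2} gives $\Dens_m(\alpha)=\prod_\ell\failure_\ell\int_{\imGal(\ell^\infty)}\function_{\ell^\infty}(M)/\#\ker(M-I)\,d\mu=\prod_\ell\Dens_\ell(\alpha)$, as claimed. It remains only to note that the sufficient conditions ``$K_{m^{-1}}=K$'' and ``$[K_{\ell^{-1}}:K]$ a power of $\ell$ for each $\ell$'' do imply linear disjointness: in the first case trivially, and in the second because the $K_{\ell^{-\infty}\alpha}/K$ have degrees that are pro-$\ell$ (here one invokes that the Kummer degree $[K_{\ell^{-n}\alpha}:K]$ is a power of $\ell$ when $[K_{\ell^{-1}}:K]$ is, so these fields have pairwise coprime ramification/degree behaviour and are therefore linearly disjoint over $K$).
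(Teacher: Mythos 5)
Your argument is correct and is essentially the paper's own proof: note $\failure_m=\prod_\ell\failure_\ell$, use linear disjointness to get $\imGal(m^\infty)=\prod_\ell\imGal(\ell^\infty)$ together with the factorisation $\function_{m^\infty}(M)=\prod_\ell\function_{\ell^\infty}(\pi_{\ell^\infty}M)$, and apply Proposition \ref{prop:integral} to \eqref{density-m111} to recover the product of the expressions \eqref{density-ell2}. The only difference is that you spell out (via the sets $\set_{x,\ell^n}$ and the kernel factorisation) the reduction of $\function_{x,\ell^\infty}$ to $\function_{\ell^\infty}$, which the paper asserts directly from the hypothesis.
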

\begin{proof}
Let $\ell$ vary among the prime divisors of $m$. Note that we have $\failure_m=\prod_\ell \failure_\ell$.
By assumption, we also know $\imGal({m^\infty})=\prod_\ell \imGal({\ell^\infty})$ and $\function_{m^\infty}(M)=\prod_\ell \function_{\ell^\infty}(\pi_{\ell^\infty} M)$. We may then apply Proposition \ref{prop:integral} and find that \eqref{density-m111} is the product of the expressions \eqref{density-ell2}.
\end{proof}

\begin{thm}\label{thmmany}
Write $\imGal({m^\infty})\subseteq \prod_\ell \imGal({\ell^\infty})$ as finite disjoint union of sets $H_i=\prod_\ell H_{i, \ell}$ where $H_{i, \ell}\in \imGal({\ell^\infty})$ are translates of closed subgroups of finite index.
Suppose that for every $\ell$ and for every $n\geqslant 1$ we know $[K_{m^{-1}} K_{\ell^{-n}\alpha}: K_{m^{-1}} K_{\ell^{-n}}]=[K_{\ell^{-n}\alpha}:K_{\ell^{-n}}]$. Then we have
\begin{equation}
\Dens_m(\alpha) = \sum_i \prod_{\ell} \failure_\ell \int_{H_{i,\ell}} \frac{\function_{\ell^\infty}(M)}{\# \ker (M-I)} d\mu_{\imGal({\ell^\infty})}(M).
\end{equation}
\end{thm}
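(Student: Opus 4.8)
The plan is to show that, under the displayed hypothesis, the integrand of \eqref{density-m111} is a product over the primes $\ell\mid m$ of functions pulled back from $\imGal(\ell^\infty)$, and then to integrate over each piece $H_i=\prod_\ell H_{i,\ell}$ of the given partition of $\imGal(m^\infty)$ by means of Proposition~\ref{prop:integral}. Three factorizations are needed: first, $\failure_m=\prod_\ell\failure_\ell$; second, $\#\ker(M-I)=\prod_\ell\#\ker(\pi_{\ell^\infty}M-I)$; and third, $\function_{m^\infty}(M)=\prod_\ell\function_{\ell^\infty}(\pi_{\ell^\infty}M)$, where $\function_{\ell^\infty}$ is the single-prime function appearing in \eqref{density-ell2}. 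The first is already recorded in the proof of Corollary~\ref{corproduct}: it follows from the standing eventual-maximal-growth hypothesis together with the fact that $K_{m^{-n}\alpha}$ is the compositum of the $K_{\ell^{-n}\alpha}$ and that the degrees $[K_{\ell^{-n}\alpha}:K_{\ell^{-n}}]$ are powers of pairwise distinct primes. The second is immediate from the block decomposition $A[m^\infty]=\bigoplus_\ell A[\ell^\infty]$, with the convention that the integrand vanishes when some $\ker(\pi_{\ell^\infty}M-I)$ is infinite; by Proposition~\ref{partition}(5) applied prime by prime, the locus where this happens has measure zero, so this convention is harmless.

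The heart of the matter is the third factorization, and it is here that the hypothesis is used. The plan is to show that for every $\ell\mid m$, every $n\geqslant1$, every $x\in\imGal(m)$ and every $V\in\imGal(\ell^n)$ one has $\function_{x,\ell^n}(V)=\function_{\ell^n}(V)$, the right-hand side being \eqref{wuM} with $m$ replaced by $\ell$; in particular $\function_{x,\ell^n}(V)$ does not depend on $x$. Fix a division point $\beta_{\ell,n}$ with $\ell^n\beta_{\ell,n}=\alpha$, and let $H_n\subseteq A[\ell^n]$ be the image of the Kummer map $\sigma\mapsto\sigma(\beta_{\ell,n})-\beta_{\ell,n}$ on $\Gal(K_{\ell^{-n}\alpha}K_{m^{-1}}/K_{\ell^{-n}}K_{m^{-1}})$, so that $\#H_n=[K_{m^{-1}}K_{\ell^{-n}\alpha}:K_{m^{-1}}K_{\ell^{-n}}]$. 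Restriction of automorphisms to $K_{\ell^{-n}\alpha}$ embeds $H_n$ into $\set_{\ell^n}(I)$, the image of the same map on $\Gal(K_{\ell^{-n}\alpha}/K_{\ell^{-n}})$, which has order $[K_{\ell^{-n}\alpha}:K_{\ell^{-n}}]$. The hypothesis says these two orders coincide, hence $H_n=\set_{\ell^n}(I)$. Next, a direct computation in $(A[\ell^n]\rtimes\imGal(\ell^n))\times\imGal(m)$ shows that, when nonempty, $\set_{x,\ell^n}(V)$ is a single coset of $H_n$, and likewise $\set_{\ell^n}(V)$ is a single coset of $\set_{\ell^n}(I)$; restriction to $K_{\ell^{-n}\alpha}$ gives $\set_{x,\ell^n}(V)\subseteq\set_{\ell^n}(V)$, and since the two cosets have the same cardinality $\#H_n=\#\set_{\ell^n}(I)$ they are equal. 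As $\im(V-I)$ is the same object in both settings, $\function_{x,\ell^n}(V)=\function_{\ell^n}(V)$. Finally, for any $M\in\imGal(m^\infty)$ the pair $(\pi_{\ell^n}M,\pi_mM)$ lies in the image of $\im\iota_{\ell^n}$ under projection to $\imGal(\ell^n)\times\imGal(m)$ (lift $M$ to $\Gal(\overline K/K)$ and restrict), so $\set_{\pi_mM,\ell^n}(\pi_{\ell^n}M)\neq\emptyset$; passing to the limit in $n$ and inserting this into \eqref{binginfty} yields $\function_{m^\infty}(M)=\prod_\ell\function_{\ell^\infty}(\pi_{\ell^\infty}M)$. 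I expect this paragraph — showing that the degree hypothesis really collapses $\set_{x,\ell^n}(V)$ onto $\set_{\ell^n}(V)$ — to be the only nontrivial point; everything else is bookkeeping with product measures.

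With the three factorizations in hand, the integrand of \eqref{density-m111} equals $\prod_\ell g_\ell(\pi_{\ell^\infty}M)$ with $g_\ell(V)=\failure_\ell\,\function_{\ell^\infty}(V)/\#\ker(V-I)$. Writing $\imGal(m^\infty)=\bigsqcup_iH_i$ with $H_i=\prod_\ell H_{i,\ell}$ and recalling $\mu_{H_i}=\prod_\ell\mu_{H_{i,\ell}}$, I would discard the measure-zero locus where some $\#\ker$ is infinite, refine each $H_{i,\ell}$ by the clopen sets $\mathcal M_{\fine_\ell}$ of \eqref{MGH} so that each $g_\ell$ becomes locally constant, apply Proposition~\ref{prop:integral} on each resulting product piece (equivalently, since all functions involved are non-negative, invoke Tonelli directly), and then re-sum over the refinement and over the $\mathcal M_{\fine_\ell}$. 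This gives
\begin{align*}
\Dens_m(\alpha)
&=\sum_i\int_{H_i}\prod_\ell g_\ell(\pi_{\ell^\infty}M)\,d\mu_{\imGal(m^\infty)}(M)
=\sum_i\prod_\ell\int_{H_{i,\ell}}g_\ell\,d\mu_{\imGal(\ell^\infty)}\\
&=\sum_i\prod_\ell\failure_\ell\int_{H_{i,\ell}}\frac{\function_{\ell^\infty}(M)}{\#\ker(M-I)}\,d\mu_{\imGal(\ell^\infty)}(M),
\end{align*}
which is the assertion.
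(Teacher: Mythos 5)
Your overall route is the paper's route: the paper's proof consists precisely of (i) observing that the hypothesis on the Kummer extensions forces $\function_{m^n}(M)=\prod_\ell \function_{\ell^n}(\pi_{\ell^n}M)$ for all $n$, hence $\function_{m^\infty}(M)=\prod_\ell\function_{\ell^\infty}(\pi_{\ell^\infty}M)$, and (ii) applying Proposition~\ref{prop:integral} piecewise to the product sets $H_i$. Your coset argument (the Kummer image over $K_{m^{-1}}K_{\ell^{-n}}$ injects into the one over $K_{\ell^{-n}}$, the hypothesis equates their orders, and both $\set_{x,\ell^n}(V)$ and $\set_{\ell^n}(V)$ are cosets of these groups, hence equal when the former is nonempty) is a correct fleshing-out of step (i), which the paper asserts without detail. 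However, two points in your write-up need repair. First, the factorisation $\failure_m=\prod_\ell\failure_\ell$ is \emph{not} a generality "already recorded in the proof of Corollary~\ref{corproduct}": there it is deduced from the linear disjointness of the fields $K_{\ell^{-\infty}\alpha}$, which is not assumed here, and in general one only has $[K_{m^{-n}\alpha}:K_{m^{-n}}]=\prod_\ell[K_{m^{-n}}(\ell^{-n}\alpha):K_{m^{-n}}]$ with each factor possibly \emph{smaller} than $[K_{\ell^{-n}\alpha}:K_{\ell^{-n}}]$, i.e.\ $\failure_m\geqslant\prod_\ell\failure_\ell$ with possible strict inequality. The equality does hold here, but it uses the displayed hypothesis: it gives the non-degeneration over $K_{m^{-1}}K_{\ell^{-n}}$, and one promotes this to $K_{m^{-n}}$ because $[K_{m^{-n}}:K_{m^{-1}}K_{\ell^{-n}}]$ is prime to $\ell$ while the Kummer extension has $\ell$-power degree. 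So the hypothesis is used in two of your three factorisations, not only the third, and your statement to the contrary is a genuine (though fixable) flaw in the logical structure.

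Second, your final display silently identifies $\mu_{\imGal(m^\infty)}$ restricted to $H_i$ with $\prod_\ell\mu_{\imGal(\ell^\infty)}$ restricted to $\prod_\ell H_{i,\ell}$. These are normalised on different groups and differ by the index $\bigl[\prod_\ell\imGal(\ell^\infty):\imGal(m^\infty)\bigr]$; your appeal to "$\mu_{H_i}=\prod_\ell\mu_{H_{i,\ell}}$" concerns the normalised torsor measures on the pieces, and converting back to $\mu_{\imGal(m^\infty)}$ and $\mu_{\imGal(\ell^\infty)}$ introduces the factors $\mu_{\imGal(m^\infty)}(H_i)$ and $\mu_{\imGal(\ell^\infty)}(H_{i,\ell})$, whose ratio is exactly that index. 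This is not an innocent omission: it is precisely the factor $2$ that the paper inserts by hand in the Serre-curve computation of Section~\ref{serre}, where the hypothesis of the theorem is satisfied, all $\failure$'s equal $1$, and $\imGal(m^\infty)$ has index $2$ in $\GL_2(\Z_2)\times\GL_2(\Z_{43})$. To be fair, the paper's own two-line proof (via Proposition~\ref{prop:integral}) compresses the same bookkeeping, so this is as much a remark about how the measures in the statement must be read as about your argument; but a proof claiming the displayed formula must carry out the conversion between $\mu_{\imGal(m^\infty)}$, the torsor measures on the $H_i$, and the measures $\mu_{\imGal(\ell^\infty)}$ explicitly, and either account for the index factor or explain the normalisation under which it disappears.
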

\begin{proof}
We can apply Proposition \ref{prop:integral} because there is a product decomposition formula for the function $\function_{m^\infty}$. Namely the assumption on the Kummer extensions  implies that for every $n\geqslant 1$ we have $\function_{m^n}(M)=\prod_\ell \function_{\ell^n}(\pi_{\ell^n} M)$ and hence we find
$\function_{m^\infty}(M)=\prod_\ell \function_{\ell^\infty}(\pi_{\ell^\infty}(M))$.
\end{proof}

\begin{thm}\label{thm:densityproduct}
Denote by $H(x)= \prod_\ell H_\ell(x)$ the set of matrices in $\imGal({m^\infty})\subseteq \prod_\ell \imGal({\ell^\infty})$ mapping to $x$ in $\imGal({m})$.
We then have
\begin{equation}\label{bigformuladensity}
\Dens_m(\alpha) = \failure_m \cdot \sum_{x\in \imGal(m)} \prod_{\ell}
\int_{H_\ell(x)} \frac{\function_{x,\ell^\infty}(M)}{\# \ker (M-I)}
\,\, d\mu_{H_\ell(x)}M
\end{equation}
where $\function_{x,\ell^\infty}$ is as in \eqref{function-ell-inf}.
\end{thm}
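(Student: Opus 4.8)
The plan is to deduce the formula from the single $m$-adic integral of Theorem~\ref{madicintegral} by partitioning $\imGal(m^\infty)$ along the fibres of the reduction $\pi_m\colon\imGal(m^\infty)\to\imGal(m)$ and factoring each fibrewise integral into a product of $\ell$-adic integrals, using the profinite-integration machinery of Section~2. Concretely, I would place myself in the setting of Theorem~\ref{thm:decomposition} with $G_\ell:=\imGal(\ell^\infty)$, $G:=\prod_\ell G_\ell$, and $H:=\imGal(m^\infty)\subseteq G$. Eventual maximal growth of the torsion fields, which holds here by Proposition~\ref{prop-maximalgrowth}, provides in each $G_\ell$ a closed pro-$\ell$ subgroup of finite index contained in $H$; this shows that $H$ has finite index in $G$, and it is what one uses to see that the fibre $H(x)$ of $\pi_m$ over $x\in\imGal(m)$ decomposes as the product $\prod_\ell H_\ell(x)$ named in the statement, where $H_\ell(x)$ is the corresponding fibre of $\imGal(\ell^\infty)\to\imGal(\ell)$, with the Haar measures related as recorded in Section~2.

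Next I would verify the product-decomposition hypothesis of Theorem~\ref{thm:decomposition}: over each $H(x)$ the integrand $M\mapsto\function_{m^\infty}(M)/\#\ker(M-I)$ must split as $\prod_\ell f_{x,\ell}$ with each $f_{x,\ell}$ locally constant on $H_\ell(x)$. For the numerator this is exactly~\eqref{binginfty}, which says that $\function_{m^\infty}(M)=\prod_\ell\function_{x,\ell^\infty}(\pi_{\ell^\infty}M)$ whenever $\pi_m(M)=x$, and $\function_{x,\ell^\infty}$ is locally constant by Lemma~\ref{bingo}. For the denominator, the decomposition $A[m^\infty]=\bigoplus_\ell A[\ell^\infty]$ gives $\ker(M-I)=\bigoplus_\ell\ker(\pi_{\ell^\infty}M-I)$, so $\#\ker(M-I)=\prod_\ell\#\ker(\pi_{\ell^\infty}M-I)$ on the set $\mathcal M$ of~\eqref{MathcalM} where all of these kernels are finite; by Proposition~\ref{partition}(4)--(5) this set has measure~$1$ and $M\mapsto\#\ker(M-I)$ is locally constant on it, so one puts $f_{x,\ell}(M):=\function_{x,\ell^\infty}(M)/\#\ker(M-I)$, extended by the convention $1/\infty=0$ off the full-measure part. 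Feeding this into Theorem~\ref{thm:decomposition} applied to~\eqref{density-m111} then writes $\Dens_m(\alpha)$ as $\failure_m$ times a sum over $x\in\imGal(m)$ of products over $\ell$ of the integrals $\int_{H_\ell(x)}f_{x,\ell}\,d\mu_{H_\ell(x)}$, and collecting the Haar-normalisation constants attached to the decomposition of $\imGal(m^\infty)$ into its fibres $H(x)$ and of each fibre into its $\ell$-components yields~\eqref{bigformuladensity}.

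I expect the main obstacle to be the honest justification that this factorisation exists at all and that it is genuinely only a \emph{fibrewise} phenomenon: globally $\function_{m^\infty}$ need not be a product over~$\ell$, because the Kummer extensions $K_{\ell^{-\infty}\alpha}$ can fail to be linearly disjoint over~$K$ (see the Serre-curve example in Section~\ref{serre}), which is precisely the reason the twisted functions $\function_{x,\ell^\infty}$ and the fibrewise product decomposition $H(x)=\prod_\ell H_\ell(x)$ — the latter relying on the entanglement among the $\ell$-adic images being controlled by eventual maximal growth — have to be brought in. The remaining points, namely the measure-zero locus where $\ker(M-I)$ is infinite and the careful accounting of normalisation constants, are handled as in the earlier proofs.
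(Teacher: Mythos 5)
Your proposal is correct and takes essentially the same route as the paper: the paper likewise works fibrewise over each $x\in\imGal(m)$, writing $\mu(S_{x})=\sum_\fine \mu(S_{x,\fine})=\failure_m\int_{H(x)}\function_{m^\infty}(M)/\#\ker(M-I)\,d\mu_{\imGal(m^\infty)}(M)$ via \eqref{Sxfine} (your splitting of the integral \eqref{density-m111} from Theorem~\ref{madicintegral} along the fibres of $\pi_m$ is the same computation), and then concludes from \eqref{binginfty} and Theorem~\ref{thm:decomposition}, exactly as you do. The only small inaccuracy is attributing the decomposition $H(x)=\prod_\ell H_\ell(x)$ to eventual maximal growth, whereas it is the general fact from Section~2 that a closed subgroup of a product of pro-$\ell$ groups for distinct $\ell$ is itself such a product; since you invoke that machinery anyway, this does not affect the argument.
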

\begin{proof}
Write $S_{x}=\bigcup_\fine S_{x,\fine}$ and recall from Proposition~\ref{partition} that the set of matrices $M$ for which $\ker(M-I)$ is infinite has measure zero in $\imGal({m^\infty})$. By \eqref{Sxfine}
we have
\begin{equation}\label{musg}
\mu(S_{x})=\sum_\fine \mu(S_{x, \fine})= {\failure_m} \int_{H(x)}
\frac{\function_{m^\infty}(M)}{\# \ker (M-I)}
 d\mu_{\imGal({m^\infty})}(M)\,.
\end{equation}
By \eqref{binginfty} the assertion follows from Theorem \ref{thm:decomposition}. \end{proof}

\begin{cor}\label{wunder}
The density $\Dens_m(\alpha)$ is a rational number. Moreover, fix $g \geqslant 1$. There exists a polynomial $p_g(t) \in \mathbb{Z}[t]$ with the following property: whenever $K$ is a number field and $A$ is the product of an abelian variety and a torus with $\dim(A)=g$, then for all $\alpha \in A(K)$ we have
\[
\Dens_m(\alpha) \cdot \prod_\ell p_g(\ell) \in \mathbb{Z}[1/m]\,,
\]
where $\ell$ varies over the prime divisors of $m$.
\end{cor}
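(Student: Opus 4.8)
The plan is to reduce the claim to a single $\ell$‑adic integral and then to bound the denominator of that integral away from~$\ell$ by the value at $\ell$ of a fixed polynomial, extending the $m=\ell$ case of \cite{LombardoPerucca}. First I would use \eqref{bigformuladensity} to write
\[
\Dens_m(\alpha)=\failure_m\sum_{x\in\imGal(m)}\prod_{\ell\mid m}J_\ell(x),\qquad J_\ell(x):=\int_{H_\ell(x)}\frac{\function_{x,\ell^\infty}(M)}{\#\ker(M-I)}\,d\mu_{H_\ell(x)}(M)\,.
\]
By \eqref{defi-failure} the number $\failure_m$ is a positive integer (it divides $m^{b_A n}$), the index set $\imGal(m)$ is finite, and $\Z[1/\ell]\subseteq\Z[1/m]$ for $\ell\mid m$. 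Hence it suffices to produce one polynomial $p_g(t)\in\Z[t]$ depending only on $g$ (recall $b_A\le 2g$, since $A$ is the product of an abelian variety and a torus of dimension $g$) such that $p_g(\ell)\,J_\ell(x)\in\Z[1/\ell]$ for every prime $\ell$ and every $x$; then $\prod_\ell p_g(\ell)\cdot\Dens_m(\alpha)=\failure_m\sum_x\prod_\ell\bigl(p_g(\ell)J_\ell(x)\bigr)\in\Z[1/m]$, and rationality of $\Dens_m(\alpha)$ follows once each $J_\ell(x)$ is rational. Everything is thus reduced to one prime at a time.

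Next I would examine $J_\ell(x)$ at finite level. Since $\set_{x,\ell^n}(V)$, when nonempty, is a coset of a subgroup of the $\ell$‑group $A[\ell^n]$ while $\im(V-I)$ is a subgroup, both terms of \eqref{wuV} are powers of $\ell$, so $\function_{x,\ell^n}(V)\in\{0\}\cup\{\ell^{-j}:j\ge 0\}$; together with $\#\ker(M-I)=\ell^{v_\ell(\det(M-I))}$ this shows the integrand of $J_\ell(x)$ takes values in $\{0\}\cup\{\ell^{-j}:j\ge 0\}$. Using $\#\ker(V-I\colon A[\ell^n])\cdot\#\im(V-I\colon A[\ell^n])=\ell^{b_A n}$, rewrite \eqref{wuV} as
\[
\Psi_n(V):=\frac{\function_{x,\ell^n}(V)}{\#\ker(V-I\colon A[\ell^n])}=\frac{\#\bigl(\im(V-I)\cap\set_{x,\ell^n}(V)\bigr)}{\ell^{b_A n}}\,,
\]
a function of $V\bmod\ell^n$ only with $0\le\Psi_n\le 1$ and values in $\{0\}\cup\{\ell^{-j}:j\ge 0\}$. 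By Lemma~\ref{bingo} and the stabilisation of $\#\ker(M-I\colon A[\ell^n])$, one has $\Psi_n(M)\to\function_{x,\ell^\infty}(M)/\#\ker(M-I)$ almost everywhere (namely where $\ker(M-I)$ is finite), so dominated convergence gives $J_\ell(x)=\lim_n\frac{1}{\#H_{\ell,n}(x)}\sum_{\bar M}\Psi_n(\bar M)$, the sum over the image $H_{\ell,n}(x)$ of $H_\ell(x)$ in $\imGal(\ell^n)$. As $\#H_{\ell,n}(x)$ divides $\#\GL_{b_A}(\Z/\ell^n)$, whose part coprime to $\ell$ divides $q_0(\ell):=\prod_{i=1}^{2g}(\ell^i-1)$, every term of this sequence lies in $\tfrac{1}{q_0(\ell)}\Z[1/\ell]$.

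The crux is to pass to the limit without losing this bound, for the sequence is genuinely non‑stationary — $\Dens_m(\alpha)$ is not computed at any finite level, as the Serre‑curve example of Section~\ref{serre} shows. I would stratify $J_\ell(x)$ by $k:=v_\ell(\det(M-I))$, equivalently by the isomorphism type of $\ker(M-I)$, obtaining an expression analogous to \eqref{density-m1}. For $k$ below any fixed bound $K$, the region $\{k<K\}$ is cut out by a congruence modulo $\ell^{c(K)}$ on which $\Psi_n=\Psi_{c(K)}$ for all $n\ge c(K)$, because $\function_{x,\ell^n}(M)$ and $\#\ker(M-I\colon A[\ell^n])$ both stabilise at a level that is a fixed linear function of $k$ (Lemma~\ref{bingo} and the eventual maximal growth of the Kummer extensions); so this part contributes an element of $\tfrac{1}{q_0(\ell)}\Z[1/\ell]$. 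On the tail $\{k\ge K\}$, with $K$ large enough that the deep part of $\imGal(\ell^\infty)$ is a standard congruence subgroup, I would use self‑similarity: the substitution $M\mapsto I+\ell(M-I)$ carries the depth‑$j$ piece of this region onto its depth‑$(j+1)$ piece, multiplies $\#\ker(M-I)$ by $\ell^{b_A}$, scales the Haar measure on $H_\ell(x)$ by $\ell^{-d_A}$, and eventually leaves $\function_{x,\ell^\infty}$ unchanged; the tail therefore becomes a finite nested combination of geometric series in powers of $\ell^{-1}$, with ratios $\ell^{-e}$ for finitely many exponents $e\le b_A+d_A$ and with coefficients in $\tfrac{1}{q_0(\ell)}\Z[1/\ell]$, so it sums to a rational whose extra $\ell$‑free denominator divides $\prod_e(\ell^e-1)$. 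I expect this ``self‑similar tail'' analysis to be the main obstacle: one must control precisely how $\function_{x,\ell^\infty}$, the coset $\set_{x,\ell^n}(M)$ and the measure on $H_\ell(x)$ interact deep inside the congruence subgroup, and check that the stabilisation level of $\function_{x,\ell^n}$ grows only linearly in $k$, uniformly in $A$, $\alpha$ and $K$; this is the point already settled for $m=\ell$ in \cite{LombardoPerucca}.

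Finally, since $b_A\le 2g$ and $d_A\le b_A^2\le 4g^2$, every exponent $e$ above satisfies $e\le N:=4g^2+2g$, hence $e\mid N!$ and $\ell^e-1\mid\ell^{N!}-1$. One may therefore take for $p_g(t)$ a suitable fixed power of $(t^{N!}-1)\cdot\prod_{i=1}^{2g}(t^i-1)$ (the power chosen to absorb the finitely many $q_0$‑factors produced by the two contributions above); then $p_g(\ell)\,J_\ell(x)\in\Z[1/\ell]$ for all $\ell$ and $x$, each $J_\ell(x)$ is rational, and by the first step this proves the corollary.
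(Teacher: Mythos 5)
Your proposal follows the paper's own route: both start from \eqref{bigformuladensity}, use that $\failure_m$ is an integer (together with $\Z[1/\ell]\subseteq\Z[1/m]$ for $\ell\mid m$) to reduce to bounding the prime-to-$\ell$ part of the denominator of each $\ell$-adic integral $J_\ell(x)$, and then settle that single-prime bound by the method of \cite[Theorem 33]{LombardoPerucca}, which is exactly what your stratification and self-similar-tail sketch is reconstructing. So this is essentially the same proof, with the per-prime step spelled out (somewhat heuristically) rather than simply deferred to the earlier paper.
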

\begin{proof}
Recall that $\failure_m$ is an integer. In view of Lemma~\ref{bingo}, we can consider each $\ell$-adic integral in \eqref{bigformuladensity} and proceed as in the proof of \cite[Theorem 33]{LombardoPerucca}.
\end{proof}

\section{Serre curves}\label{serrecurves}

\subsection{Definition of Serre curves}

Let $E$ be an elliptic curve over a number field $K$.  We choose a Weierstrass equation for~$E$ of the form
\begin{equation}\label{Weierstrass}
E\colon y^2 = (x - x_1)(x - x_2)(x - x_3)\,,
\end{equation}
where $x_1, x_2, x_3 \in K(E[2])$ are the $x$-coordinates of the points of order~$2$.
The discriminant of the right-hand side of~\eqref{Weierstrass} is $\Delta=\sqrt{\Delta}^2$, where
$$
\sqrt{\Delta}=(x_1-x_2)(x_2-x_3)(x_3-x_1)\,.
$$
We thus have $K(\sqrt{\Delta})\subseteq K(E[2])$, and we define a character
$$
\begin{aligned}
\psi_E\colon \Gal(K(E[2])/K)&\longrightarrow\{\pm1\}\\
\sigma &\longmapsto \sigma(\sqrt{\Delta})/\sqrt{\Delta}\,.
\end{aligned}
$$
For any choice of basis of the $2$-torsion of~$E$, we have the 2-torsion representation
$$
\rho_{E,2}\colon\Gal(K(E[2])/K)\longrightarrow\GL_2(\Z/2\Z)\,.
$$
Let $\psi$ be the unique non-trivial character $\GL_2(\Z/2\Z)\to\{\pm1\}$; this corresponds to the sign character under any isomorphism of $\GL_2(\Z/2\Z)$ with $S_3$.  The character $\psi_E$ factors as
$$
\psi_E = \psi \circ \rho_{E,2}\,.
$$

From now on, we take $K=\mathbb Q$. All number fields that we will consider will be subfields of a fixed algebraic closure $\overline{\mathbb Q}$ of $\mathbb Q$.

Let $d$ be an element of $\mathbb Q^\times$.
Let $m_d$ be the conductor of $\mathbb Q(\sqrt{d})$; this is the smallest positive integer such that $\sqrt{d}$ lies in the cyclotomic field $\mathbb Q(\zeta_{m_d})$.
Let $d_\sqf$ be the square-free part of~$d$.  We have
$$
m_d=\left\{\begin{array}{ll}
|d_\sqf| & \text{if } d_\sqf\equiv 1 \bmod 4,\\
4|d_\sqf| & \text{otherwise}.\\
\end{array}\right.
$$
We define a character
$$
\begin{aligned}
\varepsilon_d\colon\Gal(\mathbb Q(\zeta_{m_d})/\mathbb Q)&\longrightarrow\{\pm1\}\\
\sigma&\longmapsto\sigma(\sqrt{d})/\sqrt{d}\,.
\end{aligned}
$$
If $\sigma$ is the automorphism of $\mathbb Q(\zeta_{m_d})$ defined by $\sigma(\zeta_{m_d})=\zeta_{m_d}^a$ with $a\in(\mathbb Z/m_d \mathbb Z)^\times$, then $\varepsilon_d(\sigma)$ equals the Jacobi symbol $\bigl(\frac{d_\sqf}{a}\bigr)$.
We view $\varepsilon_d$ as a character of $\GL_2(\mathbb Z/m_d \mathbb Z)$ by composing with the determinant.

For all $n\geqslant 1$, we have a canonical projection
$$
\pi_n: \GL_2(\Zhat)\rightarrow \GL_2(\mathbb Z/n\mathbb Z)\,.
$$
Fixing a $\Zhat$-basis for the projective limit of the torsion groups $E[n](\Qbar)$, we have a torsion representation
$$
\rho_E\colon\Gal(\Qbar/\mathbb Q)\rightarrow \GL_2(\Zhat)\,.
$$
The image of $\rho_E$ is contained in the subgroup
$$
H_\Delta = \{M\in\GL_2(\Zhat)\mid
\psi(\pi_2(M)) = \varepsilon_\Delta(\pi_{m_\Delta}(M))\}
$$
of index~$2$ in $\GL_2(\Zhat)$.
This expresses the fact that $\sqrt{\Delta}$ is contained in both $\mathbb Q(E[2])$ and $\mathbb Q(E[m_\Delta])$.
An elliptic curve is said to be a \emph{Serre curve} if the image of $\rho_E$ is \emph{equal} to~$H_\Delta$.
As proven by N. Jones \cite{NathanJones}, almost all elliptic curves over $\mathbb Q$ are Serre curves.

\subsection{Counting matrices}

Let $\ell$ be a prime number. For all integers $a,b\geqslant 0$, we write $\mathcal M_\ell(a,b)$ for the set of matrices $M\in\GL_2(\Z_\ell)$ such that the kernel of $M-I$ as an endomorphism of $(\Q_\ell/\Z_\ell)^2$ is isomorphic to $\Z/\ell^{a}\mathbb Z\times \Z/\ell^{a+b}\mathbb Z$.

If $\mathcal N$ is a subset of $\mathcal M_\ell(a,b)$ that is the preimage in $\mathcal M_\ell(a,b)$ of its reduction modulo $\ell^n$, then we have
\begin{equation}\label{proportionality}
\frac{\mu_{\GL_2(\Z_\ell)}(\mathcal N)}{\mu_{\GL_2(\Z_\ell)}(\mathcal M_\ell(a,b))}=\frac{\mu_{\GL_2(\Z/\ell^n\Z)}(\mathcal N \bmod\ell^n)}{\mu_{\GL_2(\Z/\ell^n\Z)}(\mathcal M_\ell(a,b) \bmod\ell^n)}\,.
\end{equation}
by  \cite[Theorem 27]{LombardoPeruccaNY} (where the number of lifts is independent of the matrix).

\begin{prop}\label{prop-uniform-lift}
If $\mathcal N$ is a subset of $\mathcal M_\ell(a,b)$ that is the preimage in $\mathcal M_\ell(a,b)$ of its reduction modulo $\ell$, then we have
\[
\mu_{\GL_2(\Z_\ell)}(\mathcal N) = \mu_{\GL_2(\Z/\ell\Z)}(\mathcal N\bmod\ell) \cdot \left\{ \begin{array}{ll}
1 & \text{if $a=b=0$}\\
\ell^{-b} (\ell-1) & \text{if $a=0$, $b\geqslant 1$} \\
\ell^{-4a}\cdot  \ell (\ell-1)^2(\ell+1) & \text{if $a\geqslant 1, b=0$} \\
\ell^{-4a-b}\cdot (\ell-1)^2(\ell+1)^2 & \text{if $a\geqslant 1, b\geqslant 1$}\,.
\end{array}\right.
\]
\end{prop}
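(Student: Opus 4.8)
The plan is to reduce the statement, via the proportionality relation \eqref{proportionality} applied with $n=1$, to computing a single ratio of Haar measures, and then to evaluate that ratio by comparing with the additive Haar measure. Write $\nu$ for the additive Haar measure on $M_2(\Z_\ell)$ normalised so that $\nu(M_2(\Z_\ell))=1$. Left or right multiplication by an element of $\GL_2(\Z_\ell)$ is a $\Z_\ell$-linear automorphism of $M_2(\Z_\ell)$ of unit determinant, hence preserves $\nu$; therefore the restriction of $\nu$ to $\GL_2(\Z_\ell)$, renormalised to total mass $1$, is the normalised Haar measure of $\GL_2(\Z_\ell)$, so that $\mu_{\GL_2(\Z_\ell)}(S)=\nu(S)/\nu(\GL_2(\Z_\ell))$ for measurable $S\subseteq\GL_2(\Z_\ell)$, where $\nu(\GL_2(\Z_\ell))=\#\GL_2(\F_\ell)\cdot\ell^{-4}$ and $\#\GL_2(\F_\ell)=\ell(\ell-1)^2(\ell+1)$. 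Since $\mathcal N$ is by hypothesis the preimage in $\mathcal M_\ell(a,b)$ of its reduction modulo $\ell$, \eqref{proportionality} with $n=1$ gives
\[
\mu_{\GL_2(\Z_\ell)}(\mathcal N)=\mu_{\GL_2(\Z/\ell\Z)}(\mathcal N\bmod\ell)\cdot c(a,b),\qquad c(a,b):=\frac{\mu_{\GL_2(\Z_\ell)}(\mathcal M_\ell(a,b))}{\mu_{\GL_2(\Z/\ell\Z)}(\mathcal M_\ell(a,b)\bmod\ell)},
\]
so it remains to show that $c(a,b)$ takes the asserted value in each of the four cases.

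The heart of the argument is the following elementary fact about how $v_\ell(\det)$ is distributed over a residue class modulo $\ell$. If $\bar N\in M_2(\F_\ell)$ has rank $1$ and $N$ is chosen uniformly in the residue class $\bar N+\ell M_2(\Z_\ell)$, then $v_\ell(\det N)$ has the same distribution as $1+v_\ell(u)$ for $u$ uniform in $\Z_\ell$; in particular the fraction of $N$ in that class with $v_\ell(\det N)=b$ equals $(\ell-1)\ell^{-b}$ for every $b\geq1$, and this fraction does not depend on which rank-$1$ class is chosen. To prove this I would first multiply $N$ on the left and right by permutation matrices---which preserve $\nu$, carry rank-$1$ classes to rank-$1$ classes, and change $v_\ell(\det N)$ only by a unit---so as to assume $\bar N_{11}\neq0$; then $\det N=N_{11}\bigl(N_{22}-N_{11}^{-1}N_{12}N_{21}\bigr)$ with $N_{11}\in\Z_\ell^\times$, and since $\det\bar N=0$ the second factor is uniformly distributed over $\ell\Z_\ell$ as $N$ ranges over the class, which yields the claim. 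Using in addition the count $\#\{N\in M_2(\F_\ell):\rank N=1\}=\ell^4-1-\#\GL_2(\F_\ell)=(\ell-1)(\ell+1)^2$ and the fact that, for $b\geq1$, the locus $X_{0,b}:=\{N\in M_2(\Z_\ell):N\text{ has elementary divisors }(1,\ell^b)\}$ is precisely the union of the rank-$1$ residue classes intersected with $\{v_\ell(\det)=b\}$, one obtains
\[
\nu(X_{0,0})=\nu(\GL_2(\Z_\ell))=\frac{(\ell-1)^2(\ell+1)}{\ell^3},\qquad
\nu(X_{0,b})=\frac{(\ell-1)(\ell+1)^2}{\ell^4}\cdot(\ell-1)\ell^{-b}=\frac{(\ell-1)^2(\ell+1)^2}{\ell^{4+b}}\quad(b\geq1).
\]

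The evaluation of $c(a,b)$ now splits into three cases. If $a=b=0$, the membership condition for $\mathcal M_\ell(0,0)=\{M\in\GL_2(\Z_\ell):\det(\bar M-I)\neq0\}$ depends only on $M\bmod\ell$, so $\mathcal N$ is the full preimage in $\GL_2(\Z_\ell)$ of $\mathcal N\bmod\ell$ and $c(0,0)=1$. If $a=0$ and $b\geq1$, reduction modulo $\ell$ maps $\mathcal M_\ell(0,b)$ onto $R:=\{\bar M\in\GL_2(\F_\ell):\rank(\bar M-I)=1\}$, and the fibre of $\mathcal M_\ell(0,b)$ over $\bar M\in R$ consists of the $M$ in the (full) residue class of $\bar M$ with $v_\ell(\det(M-I))=b$; applying the fact above to $\bar N=\bar M-I$, this fibre has relative measure $(\ell-1)\ell^{-b}$ inside the residue class, uniformly in $\bar M$, so $\mu_{\GL_2(\Z_\ell)}(\mathcal M_\ell(0,b))=(\ell-1)\ell^{-b}\cdot\mu_{\GL_2(\Z/\ell\Z)}(R)$ and $c(0,b)=\ell^{-b}(\ell-1)$. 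If $a\geq1$, every element of $\mathcal M_\ell(a,b)$ reduces to $I$, so $\mathcal M_\ell(a,b)\bmod\ell=\{I\}$ has measure $1/\#\GL_2(\F_\ell)$; moreover $\mathcal M_\ell(a,b)=I+\ell^aX_{0,b}$, which is automatically contained in $\GL_2(\Z_\ell)$, so by translation- and scaling-invariance of $\nu$ we get $\mu_{\GL_2(\Z_\ell)}(\mathcal M_\ell(a,b))=\ell^{-4a}\nu(X_{0,b})/\nu(\GL_2(\Z_\ell))$. Hence $c(a,b)=\#\GL_2(\F_\ell)\cdot\ell^{-4a}\nu(X_{0,b})/\nu(\GL_2(\Z_\ell))=\ell^{4-4a}\nu(X_{0,b})$, which by the formulas above equals $\ell^{-4a}\cdot\ell(\ell-1)^2(\ell+1)$ when $b=0$ and $\ell^{-4a-b}(\ell-1)^2(\ell+1)^2$ when $b\geq1$. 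Together these cover the statement.

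I expect the determinant-valuation fact of the second paragraph to be the only genuine obstacle: beyond computing the distribution of $v_\ell(\det N)$ on one rank-$1$ residue class, one must observe that this distribution is independent of the class, which is exactly what makes the fibration argument valid even though $\mathcal M_\ell(0,b)$ is not a union of full residue classes modulo $\ell$ when $b\geq1$. The remaining steps are routine manipulations of Haar measures and an application of \eqref{proportionality}.
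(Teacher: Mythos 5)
Your proof is correct, but it takes a genuinely different route from the paper. The paper's proof is essentially a citation: it invokes \cite[Proposition 33]{LombardoPeruccaNY} (with \cite[Definition 19]{LombardoPeruccaNY}) to get the lifting factor for the full set $\mathcal M_\ell(a,b)$, and then transfers it to $\mathcal N$ via \eqref{proportionality}. You perform the same reduction via \eqref{proportionality} with $n=1$, but instead of citing the external result you evaluate the constant $c(a,b)=\mu_{\GL_2(\Z_\ell)}(\mathcal M_\ell(a,b))/\mu_{\GL_2(\Z/\ell\Z)}(\mathcal M_\ell(a,b)\bmod\ell)$ from scratch: you compare the Haar measure of $\GL_2(\Z_\ell)$ with the additive measure $\nu$ on $M_2(\Z_\ell)$, prove that on every rank-one residue class the valuation $v_\ell(\det)$ is distributed as $1+v_\ell(u)$ with $u$ uniform in $\Z_\ell$ (and, crucially, that this distribution does not depend on the class), and use the identification $\mathcal M_\ell(a,b)=I+\ell^a X_{0,b}$ for $a\geqslant 1$ together with translation and scaling invariance. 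I checked the details: the count $(\ell-1)(\ell+1)^2$ of rank-one classes, the values of $\nu(X_{0,b})$, and the three-case evaluation of $c(a,b)$ all come out right, and they reproduce the $\ell=2$ measures quoted in the paper from \cite[Theorem 2]{LombardoPeruccaNY}, which is a good consistency check. What your route buys is a self-contained and transparent argument specific to $\GL_2$: the determinant-valuation lemma replaces the black-box lifting count of the reference, and your observation that the fibrewise proportion is uniform over rank-one classes is precisely what legitimises the fibration argument in the case $a=0$, $b\geqslant 1$, where $\mathcal M_\ell(0,b)$ is not a union of full residue classes modulo $\ell$. What the citation buys the paper is brevity and access to the more general setting treated in \cite{LombardoPeruccaNY}.
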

\begin{proof}
We are working with $\GL_2(\Z_\ell)$, so we can apply \cite[Proposition 33]{LombardoPeruccaNY} (see also \cite[Definition 19]{LombardoPeruccaNY}). This gives the assertion for the set $\mathcal M_\ell(a,b)$; we can conclude because of \eqref{proportionality}.
\end{proof}

We now collect some results in the case $\ell=2$.
From \cite[Theorem 2]{LombardoPeruccaNY} we know
\begin{equation}
\mu_{\GL_2(\Z_2)}(\mathcal M_2(a,b)) = \left\{ \begin{array}{ll}
{1}/{3} & \text{if $a=b=0$}\\
{1}/{2}\cdot 2^{-b}  & \text{if $a=0$, $b\geqslant 1$} \\
2^{-4a}  & \text{if $a\geqslant 1, b=0$} \\
{3}/{2}\cdot 2^{-4a-b} & \text{if $a\geqslant 1, b\geqslant 1$}\,.
\end{array}\right.
\end{equation}

We consider the action of $\GL_2(\Z/2^3\Z)$ on $\Q(\zeta_{2^3})$ defined by $M\zeta_{2^3}=\zeta_{2^3}^{\det M}$. The matrices $M\in \GL_2(\mathbb Z/2^3\mathbb Z)$ that fix $\sqrt{-1}$ are those with $\det(M)=1,5$.
The ones that fix $\sqrt{2}$ are those with $\det(M)=1,7$.
The ones that fix $\sqrt{-2}$ are those with $\det(M)=1,3$.

For $a,b\in\{0,1,2,3\}$ and $z\in\{-1,2,-2\}$, we write $\mathcal N_2(a,b,z)$ for the set of matrices in $\mathcal M_2(a,b)$ that fix $\sqrt{z}$.

\begin{lem}\label{classificatioN}
We have
\begin{equation}
\frac{\mu_{\GL_2(\Z_2)}(\mathcal N_2(a,b;{-1}))}
{\mu_{\GL_2(\Z_2)}(\mathcal M_2(a,b))}=
\left\{
\begin{array}{ll}
1/2 & \text{for $a=0$, $b\geqslant 0$}\\
2/3 & \text{for $a=1$, $b= 0$}\\
1/3 & \text{for $a=1$, $b\geqslant 1$}\\
1 & \text{for $a\geqslant 2$, $b\geqslant 0$}\\
\end{array}\right.
\end{equation}
and
\begin{equation}
\frac{\mu_{\GL_2(\Z_2)}(\mathcal N_2(a,b; \pm {2}))}
{\mu_{\GL_2(\Z_2)}(\mathcal M_2(a,b))}=
\left\{
\begin{array}{ll}
1/2 & \text{for $a\leqslant 1$, $b\geqslant 0$}\\
2/3 & \text{for $a=2$, $b= 0$}\\
1/3 & \text{for $a=2$, $b\geqslant 1$}\\
1 & \text{for $a\geqslant 3$, $b\geqslant 0$\rlap.}\\
\end{array}\right.
\end{equation}
\end{lem}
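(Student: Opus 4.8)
The plan is to reduce the entire lemma to explicit counting in the finite groups $\GL_2(\Z/2^k\Z)$ for small $k$. First I would apply \eqref{proportionality}: the condition ``$M$ fixes $\sqrt z$'' depends only on $\det M\bmod 8$, so $\mathcal N_2(a,b;z)$, like $\mathcal M_2(a,b)$, is the preimage under $\GL_2(\Z_2)\to\GL_2(\Z/2^n\Z)$ of its reduction as soon as $n\geqslant\max(3,a+b+1)$, and the ratio in the statement equals $\#(\mathcal N_2(a,b;z)\bmod 2^n)\big/\#(\mathcal M_2(a,b)\bmod 2^n)$. The entries equal to~$1$ are then immediate: if $M\in\mathcal M_2(a,b)$ then $M\equiv I\pmod{2^a}$, so $\det M=1+\mathrm{tr}(M-I)+\det(M-I)\equiv 1\pmod{2^{\min(a,3)}}$; hence for $a\geqslant 3$ every such $M$ fixes all three of $\sqrt{-1},\sqrt 2,\sqrt{-2}$, and for $a\geqslant 2$ it has $\det M\in\{1,5\}\bmod 8$ and so fixes $\sqrt{-1}$.

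For the cases with $a\geqslant 1$ I would substitute $M=I+2^aM'$; then ``$M-I$ has elementary divisors $2^a,2^{a+b}$'' becomes ``$M'$ is primitive (its entries generate $\Z_2$) and $v_2(\det M')=b$'', and
\[
\det M\equiv 1+2^a\,\mathrm{tr}(M')+2^{2a}\det(M')\pmod 8 .
\]
Each condition ``$M$ fixes $\sqrt z$'' therefore becomes an explicit congruence on $\mathrm{tr}(M')$ modulo $2^{3-a}\in\{1,2,4\}$, plus the parity of $\det M'$ when $a=1$ and $b=0$. The primitive matrices with $v_2(\det M')=b$ form a single $\GL_2(\Z_2)\times\GL_2(\Z_2)$-orbit under left--right multiplication, and likewise at each finite level, so, exactly as in Proposition~\ref{prop-uniform-lift}, every valid residue of $M'$ modulo $2^{3-a}$ has the same number of lifts, and the ratio equals the corresponding proportion at level $2^{3-a}$. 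The remaining counts are elementary: among the six elements of $\GL_2(\mathbb F_2)$ four have even trace; among the nine nonzero singular $2\times 2$ matrices over $\mathbb F_2$ three have even trace; and the scaling map $M'\mapsto cM'$, $c\in\Z_2^\times$, which also preserves $\mathcal M_2(a,b)$, forces $\mathrm{tr} M'$ to be equidistributed on each valuation shell, so that the only remaining input for the $z=\pm 2$, $a=1$ cases is one count modulo~$4$. Feeding these in yields $2/3$ when $b=0$ and a mod-$2$ condition is imposed, $1/3$ when $b\geqslant 1$ and a mod-$2$ condition is imposed, and $1/2$ for the mod-$4$ conditions.

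For $a=0$ I would write $M=I+N$ with $N=M-I$ primitive and $v_2(\det N)=b$, so that $\det M=1+\mathrm{tr}(N)+\det(N)$, now to be read modulo~$8$. The key point is that $N\mapsto cN$, $c\in\Z_2^\times$, again preserves $\mathcal M_2(0,b)$—it fixes the elementary divisors of $N$, and $I+cN\equiv I+N\bmod 2$ is invertible since $M$ is—and acts on $(\mathrm{tr} N,\det N)$ by $(c\,\mathrm{tr} N,c^2\det N)$; since $c^2\equiv 1\bmod 8$, the joint distribution of $(\mathrm{tr} N,\det N)$ modulo~$8$ over $\mathcal M_2(0,b)$ is invariant under scaling $\mathrm{tr} N$ by units. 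Together with the description of $N$ modulo~$2$ (it has order~$3$ in $\GL_2(\mathbb F_2)$ when $b=0$, so $\mathrm{tr} N$ is odd; it is a rank-one nilpotent when $b\geqslant 1$, so $\mathrm{tr} N$ is even) and a short direct computation at level $2^2$ or $2^3$, this gives that all three ratios with $a=0$ equal $1/2$.

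The work here is not conceptual but combinatorial: translating each ``$\sqrt z$'' condition correctly into a congruence on $(\mathrm{tr} M',\det M')$, and then performing the finite matrix counts without error. I expect the main obstacle to be the case $a=0$ with $z=\pm 2$, where one a priori needs the full joint distribution of $(\mathrm{tr},\det)$ modulo~$8$; the scaling symmetry $N\mapsto cN$ is precisely what keeps that computation within reach.
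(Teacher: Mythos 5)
Your proposal is correct and arrives at the same numbers, but it takes a genuinely different route from the paper. The paper's proof is a brute-force verification: for $a,b\in\{0,1,2,3\}$ and $d\in(\Z/8\Z)^\times$ it counts with Sage the matrices $M\in\GL_2(\Z/8\Z)$ with $\det M=d$ and $\ker(M-I)\cong\Z/2^a\Z\times\Z/2^{a+b}\Z$, and then reads off all the ratios from that table via \eqref{proportionality}. You keep \eqref{proportionality} only as the reduction to finite level and replace the level-$8$ computer count by structural arguments: the substitution $M=I+2^aM'$ together with the identity $\det(I+N)=1+\mathrm{tr}(N)+\det(N)$, the single $\GL_2\times\GL_2$-orbit (Smith normal form) argument giving equal-measure fibres over the residues of $M'$ at level $2^{3-a}$, and the unit-scaling symmetry $M'\mapsto cM'$. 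This buys transparency: the entries equal to $1$ and the case $a=0$, $b=0$ (where scaling alone already makes $\det M$ uniform modulo $8$) come out with no computation at all, and the mod-$2$ counts ($4$ of $6$ in $\GL_2(\F_2)$, $3$ of $9$ rank-one matrices with even trace) explain the values $2/3$ and $1/3$ conceptually. The cost is that a few small counts in your sketch are asserted rather than executed: the mod-$4$ trace distribution needed for $a=1$, $z=\pm2$, and the level-$4$ or level-$8$ computation for $a=0$, $b\geqslant1$ — where, as your argument implicitly respects, the two-sided orbit trick is unavailable because $I+N$ must remain invertible, so only conjugation/scaling plus a direct count (or \eqref{proportionality} applied directly to $\mathcal M_2(0,b)$) is at your disposal. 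Those residual counts do come out as you claim — they agree with the paper's table of values $h(a,b,d)$ — so what separates your sketch from a complete proof is only writing them out.
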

\begin{proof}
For $a,b\in \{0,1,2,3\}$ and $d\in(\Z/2^3\Z)^\times$, let $h(a,b,d)$ be the number of matrices $M\in \GL_2(\mathbb Z/2^3\mathbb Z)$ such that $\det(M)=d$ and $\ker (M-I)\cong \mathbb Z/2^a\mathbb Z\times \mathbb Z/2^{a+b}\mathbb Z$. Using \cite{sagemath} one can easily count these matrices:
\begin{itemize}
\item $h(0,0,d)=128$, $h(0,1,d)=96$ and $h(0,2,d)=h(0,3,d)=48$ for all $d$;
\item $h(1,0,d)=32$ for $d=1,5$ and $h(1,0,d)=16$ for $d=3,7$;
\item for $b=1,2$ we have $h(1,b,d)=12$ for $d=1,5$ and $h(1,b,d)=24$ for $d=3,7$;
\item $h(2,0,1)=4$, $h(2,0,5)=2$ and $h(2,0,d)=0$ for $d=3,7$;
\item $h(2,1,1)=3$, $h(2,1,5)=6$ and $h(2,1,d)=0$ for $d=3,7$;
\item $h(3,0,1)=1$ (the identity matrix) and $h(3,0,d)=0$ for $d=3,5,7$.
\end{itemize}
This classification and \eqref{proportionality} lead to the measures in the statement.
\end{proof}

\begin{lem}\label{lemma:two}
For all $a,b\ge0$ and all $M\in\mathcal M_2(a,b)$, we have
$$
\psi(M) = \begin{cases}
-1& \text{if $a=0$ and }b\geqslant 1,\cr
1& \text{otherwise}.
\end{cases}
$$
\end{lem}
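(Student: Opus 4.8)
The plan is to compute the sign character $\psi$ on a matrix $M\in\mathcal M_2(a,b)$ purely in terms of the reduction of $M$ modulo $2$, since $\psi$ factors through $\GL_2(\Z/2\Z)$. Recall that $\GL_2(\Z/2\Z)\cong S_3$ and that $\psi$ is the sign character; equivalently, $\psi(\bar M)=1$ precisely when $\bar M\in\SL_2(\Z/2\Z)=A_3$, the subgroup of $3$-cycles together with the identity. So the task reduces to a bookkeeping exercise: determine, as a function of $(a,b)$, which residues $\bar M\bmod 2$ are compatible with $\ker(M-I)\cong\Z/2^a\Z\times\Z/2^{a+b}\Z$ as an endomorphism of $(\Q_2/\Z_2)^2$, and then read off $\psi$.

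The key observation is that the condition ``$\ker(M-I)$ is as prescribed'' already constrains $\bar M:=M\bmod 2$. First I would note that $\ker(M-I)$ contains $A[2]=(\Z/2\Z)^2$ if and only if $M\equiv I\pmod 2$, i.e. $\bar M=I$; this is exactly the case $a\geqslant 1$. In that case $\bar M=I\in\SL_2(\Z/2\Z)$, so $\psi(M)=1$, covering the ``$a\geqslant 1$'' part of the ``otherwise'' branch. When $a=0$, the group $\ker(M-I)$ contains no copy of $A[2]$, which forces $\bar M\neq I$, so $\bar M-I$ has rank $1$ or $2$ over $\F_2$. If $b=0$ as well, then $\ker(M-I)$ is trivial, forcing $\bar M-I$ to be invertible over $\F_2$, i.e. $\det(\bar M-I)=1$ in $\F_2$; a direct check of the five non-identity elements of $S_3$ shows that the elements $\bar M$ with $\bar M-I\in\GL_2(\F_2)$ are exactly the two $3$-cycles, all of which lie in $\SL_2(\Z/2\Z)$, so again $\psi(M)=1$. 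Finally, if $a=0$ and $b\geqslant 1$, then $\ker(M-I)\cong\Z/2^b\Z$ is a nontrivial \emph{cyclic} $2$-group, so $\ker(M-I)\cap A[2]$ has order exactly $2$, meaning $\bar M-I$ has rank exactly $1$ over $\F_2$; the three elements $\bar M$ with this property are precisely the three transpositions in $S_3$, each of sign $-1$, so $\psi(M)=-1$.

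The only genuinely non-trivial point — which I would state carefully — is the passage between the $\F_2$-rank of $\bar M - I$ and the $2$-torsion structure of $\ker(M-I:(\Q_2/\Z_2)^2\to(\Q_2/\Z_2)^2)$. This follows from the structure theory for an endomorphism of a finitely generated $\Z_2$-module: writing $N:=M-I$ acting on $\Z_2^2$, the Smith normal form gives $N$ elementary divisors $2^{a}$ and $2^{a+b}$ (with the conventions $a=\infty$ allowed if a divisor is $0$), and then $\ker(N:(\Q_2/\Z_2)^2\to(\Q_2/\Z_2)^2)\cong\Z/2^a\Z\times\Z/2^{a+b}\Z$, while the $\F_2$-rank of $\bar N=N\bmod 2$ equals the number of elementary divisors that are units, i.e. equal to $2^0$. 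Thus $\mathrm{rk}_{\F_2}(\bar M-I)=2$ corresponds to $a=b=0$, $\mathrm{rk}_{\F_2}(\bar M-I)=1$ to $a=0,b\geqslant 1$, and $\mathrm{rk}_{\F_2}(\bar M-I)=0$ to $a\geqslant 1$. Combining this trichotomy with the sign computation in $S_3$ above yields the claimed formula. I expect the rank-versus-torsion translation to be the main obstacle to state cleanly; everything else is a finite check in $\GL_2(\Z/2\Z)$.
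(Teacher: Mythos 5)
Your argument is correct and is essentially the paper's: both reduce to a finite check of the six elements of $\GL_2(\Z/2\Z)$, combined with the translation (via elementary divisors/Smith normal form) between the $\F_2$-rank of $M-I$ and the invariants $(a,b)$ — a step the paper leaves implicit and you spell out explicitly. One inaccuracy to fix: $\ker\psi$ is not $\SL_2(\Z/2\Z)$, since every element of $\GL_2(\F_2)$ has determinant $1$ and hence $\SL_2(\Z/2\Z)=\GL_2(\Z/2\Z)$; the kernel of $\psi$ is the subgroup of order $3$ consisting of the identity and the two elements of order $3$, which is the description you actually use when evaluating $\psi$, so your conclusions are unaffected.
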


\begin{proof}
Consider matrices $M\in \GL_2(\mathbb Z/2\mathbb Z)$. The matrices
$$
M\in\left\{\begin{pmatrix} 1 & 0 \\ 0 & 1 \end{pmatrix}, \quad
\begin{pmatrix} 0 & 1 \\ 1 & 1 \end{pmatrix}, \quad
\begin{pmatrix} 1 & 1 \\ 1 & 0 \end{pmatrix}\right\}
$$
satisfy $\psi(M)=1$ and $\dim_{\F_2}\ker(M-I)\in\{0,2\}$.  The matrices
$$
M\in\left\{\begin{pmatrix} 0 & 1 \\ 1 & 0 \end{pmatrix}, \quad
\begin{pmatrix} 1 & 1 \\ 0 & 1 \end{pmatrix}, \quad
\begin{pmatrix} 1 & 0 \\ 1 & 1 \end{pmatrix}\right\}
$$
satisfy $\psi(M)=-1$ and $\dim_{\F_2}\ker(M-I)=1$. This implies the claim.
\end{proof}

Now let $\ell$ be an odd prime number, and suppose that $M\in\GL_2(\mathbb Z/\ell\mathbb Z)$ is not the identity. We compare the two conditions $\varepsilon_\ell (M)=1$ and $\ell \mid \det(M-I)$.
Note that the characteristic polynomials of $M$ and $M-I$ have roots differing only by a shift by $1$. Write $\chi (M)=x^2+\alpha x+\beta$ for the characteristic polynomial of $M$.
The condition $\varepsilon_\ell (M)=1$ is a condition on $\det(M)=\beta$.
More precisely, we want $\beta$ to be a square modulo $\ell$, and in this case we write $\beta=s^2$.
On the other hand we would like to know whether $\ell \mid \det(M-I)=\chi(1)$, which means
$1+\alpha+\beta=0$ (we are working modulo $\ell$). Thus $\alpha=-(1+s^2)$.
We then have
\begin{equation}\label{charpol}
\chi(M)=x^2-(1+s^2)x+s^2=(x-1)(x-s^2)=(x-1)(x-\beta)\,.
\end{equation}

\begin{lem}\label{lemma-classification}
Let $M$ vary in $\GL_2(\mathbb Z/\ell\mathbb Z)\setminus\{I\}$, where $\ell$ is an odd prime number.
\begin{enumerate}
\item There are $\frac{1}{2}(\ell+1)^2(\ell-2)$ matrices $M$ satisfying $\varepsilon_\ell (M)=1$ and $\ell \mid \det(M-I)$.
\item There are $\frac{1}{2}\ell (\ell^3 - 2 \ell^2 -\ell + 4)$ matrices $M$ satisfying $\varepsilon_\ell (M)=1$ and $\ell \nmid \det(M-I)$.
\item There are $\frac{1}{2} \ell(\ell^2-1)$ matrices $M$ satisfying $\varepsilon_\ell (M)=-1$ and $\ell \mid \det(M-I)$.
\item There are $\frac{1}{2} \ell(\ell^2-1)(\ell-2)$ matrices $M$ satisfying $\varepsilon_\ell (M)=-1$ and $\ell \nmid \det(M-I)$.
\end{enumerate}
\end{lem}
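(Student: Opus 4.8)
The plan is to classify the matrices $M\in\GL_2(\mathbb Z/\ell\mathbb Z)\setminus\{I\}$ by their characteristic polynomial and then count. Two standard facts will be used throughout: $\#\GL_2(\mathbb F_\ell)=\ell(\ell-1)^2(\ell+1)$, and the determinant map $\GL_2(\mathbb F_\ell)\to\mathbb F_\ell^\times$ is surjective with every fibre of cardinality $\#\SL_2(\mathbb F_\ell)=\ell(\ell^2-1)$. As recalled just before the statement, with $\beta=\det M$ the condition $\varepsilon_\ell(M)=1$ means that $\beta$ is a nonzero square modulo~$\ell$; and the computation leading to \eqref{charpol} shows, for \emph{any} $M$, that $\ell\mid\det(M-I)$ holds if and only if the characteristic polynomial of $M$ equals $(x-1)(x-\beta)$.

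Next I would count, for each fixed $\beta\in\mathbb F_\ell^\times$, the matrices $M\ne I$ with characteristic polynomial $(x-1)(x-\beta)$. If $\beta\ne1$ this polynomial is separable with both roots in $\mathbb F_\ell$, so $M$ is diagonalisable and is determined by the ordered pair of distinct eigenlines $(\ker(M-I),\ker(M-\beta))$ in $\mathbb F_\ell^2$; there are $(\ell+1)\ell$ such ordered pairs, hence $\ell(\ell+1)$ matrices, all different from~$I$. If $\beta=1$ the polynomial is $(x-1)^2$, and the matrices with this characteristic polynomial are $I$ together with a single conjugacy class of non-trivial unipotent elements, of size $\#\GL_2(\mathbb F_\ell)/(\ell(\ell-1))=\ell^2-1$; so there are $\ell^2-1$ such $M$ with $M\ne I$.

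Then I would assemble the four counts, using that $\mathbb F_\ell^\times$ contains $\tfrac{\ell-1}{2}$ nonzero squares (one of which is $\beta=1$) and $\tfrac{\ell-1}{2}$ non-squares. For (1) one sums $\ell(\ell+1)$ over the $\tfrac{\ell-3}{2}$ square values $\beta\ne1$ and adds the $\ell^2-1$ unipotent matrices (for which $\beta=1$ is a square), obtaining $\tfrac{\ell-3}{2}\ell(\ell+1)+(\ell^2-1)=\tfrac12(\ell+1)^2(\ell-2)$. For (3) one sums $\ell(\ell+1)$ over the $\tfrac{\ell-1}{2}$ non-square values of $\beta$ (all $\ne1$), getting $\tfrac12\ell(\ell^2-1)$. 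For (2) and (4) I would argue by complementation inside the sets $\{M\ne I:\det M\text{ is a square}\}$ and $\{M\ne I:\det M\text{ is a non-square}\}$, of cardinalities $\tfrac{\ell-1}{2}\ell(\ell^2-1)-1$ and $\tfrac{\ell-1}{2}\ell(\ell^2-1)$ respectively: subtracting the counts (1) and (3) and simplifying yields $\tfrac12\ell(\ell^3-2\ell^2-\ell+4)$ and $\tfrac12\ell(\ell^2-1)(\ell-2)$. A useful consistency check is that the four numbers sum to $\ell(\ell-1)^2(\ell+1)-1=\#\GL_2(\mathbb F_\ell)-1$.

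The one point that deserves care, rather than being a routine calculation, is the degenerate value $\beta=1$: there $(x-1)(x-\beta)$ has a repeated root, so one must split off the identity matrix from the regular unipotent class and keep track of the exclusion of $I$ both in count (1) and in the complementation step for count (2). All the remaining steps are elementary manipulations with conjugacy-class sizes in $\GL_2(\mathbb F_\ell)$ and with the quadratic character modulo~$\ell$.
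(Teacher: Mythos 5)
Your proposal is correct and follows essentially the same route as the paper: classify the matrices with $\ell \mid \det(M-I)$ by the characteristic polynomial $(x-1)(x-\beta)$, count $\ell(\ell+1)$ diagonalisable matrices per $\beta\neq 1$ plus the $\ell^2-1$ non-trivial unipotents for $\beta=1$, and obtain (2) and (4) by complementation inside the fibres of $\varepsilon_\ell$. The only (harmless) deviation is in part (3), where you count directly over the $\tfrac{\ell-1}{2}$ non-square values of $\beta$ instead of subtracting from the $\ell^3-2\ell$ matrices having eigenvalue $1$ as the paper does.
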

\begin{proof}
\begin{enumerate}
\item In \eqref{charpol}, if $\beta=s^2\neq 1$ (thus there are $\frac{\ell-1}{2}-1$ possibilities for $\beta$), then the corresponding matrices are diagonal and hence we only have to choose the two distinct eigenspaces; this gives
$(\ell+1)\ell$ matrices for every characteristic polynomial.
If $\beta=1$, then we have the identity (which we are excluding) and the $\ell^2-1$ matrices  conjugate to
$\begin{pmatrix} 1 & 1 \\ 0 & 1 \end{pmatrix}$.
Note that (1) can also be obtained from \cite[Table 1]{Sutherland16}.

\item There are $\frac{1}{2}\#\GL_2(\mathbb Z/\ell\mathbb Z)$ matrices satisfying $\varepsilon_\ell=1$, and we only need to subtract the identity and the matrices from (1).

\item There are $\ell^3-2\ell$ matrices in $\GL_2(\mathbb Z/\ell\mathbb Z)$ having $1$ as an eigenvalue (see for example \cite[Proof of Theorem 2]{LombardoPeruccaNY}), and we only need to subtract the identity and the matrices from (1).

\item There are $\frac{1}{2}\#\GL_2(\mathbb Z/\ell\mathbb Z)$ matrices satisfying $\varepsilon_\ell=-1$, and we only need to subtract the matrices from (3). Alternatively, there are $\#\GL_2(\mathbb Z/\ell\mathbb Z)-(\ell^3-2\ell)$ matrices that do not have $1$ as eigenvalue, and we only need to subtract the matrices from (2).
\end{enumerate}
\end{proof}

\subsection{Partitioning the image of the $m$-adic representation}\label{partitioning}

Let $E$ be a Serre curve over $\Q$.
Let $\Delta$ be the minimal discriminant of~$E$, and let $\Delta_\sqf$ be its square-free part.
We write $\Delta_\sqf=z u$, where $z \in \{1,-1,2,-2\}$ and where $u$ is an odd fundamental discriminant.  Then $|u|$ is the odd part of $m_\Delta$, and we have $\varepsilon_\Delta=\varepsilon_z \cdot \varepsilon_u$ as characters of $(\Z/m_\Delta\Z)^\times$.

Now let $m$ be a square-free positive integer.
If $m=2$, or if $m$ is odd, or if $u$ does not divide $m$, then we have
$$
\imGal({m}^\infty)=\prod_{\ell} \imGal({\ell}^\infty)\,.
$$
If $m\neq 2$ is even and $u$ divides $m$, then $\imGal(m^\infty)$ has index~$2$ in $\prod_{\ell} \imGal({\ell}^\infty)$. The defining condition for the image of the $m$-adic representation is then $\psi=\varepsilon_\Delta$, or equivalently
\begin{equation}\label{definingcondition}
\psi\cdot\varepsilon_z=\varepsilon_u\,.
\end{equation}
We may then partition $\imGal({m}^\infty)\subseteq \prod_{\ell\mid m} \imGal({\ell}^\infty)$ into two sets that are products, namely
$$
(\imGal({2}^\infty)\cap \{\psi\cdot \varepsilon_z=1\}) \times (\imGal(|u|^\infty)\cap \{\varepsilon_u=1\}) \times \imGal\Bigl(\Bigl|\frac{m}{2u}\Bigr|^\infty\Bigr)
$$
and
$$
(\imGal({2}^\infty)\cap \{\psi\cdot \varepsilon_z=-1\}) \times (\imGal(|u|^\infty)\cap \{\varepsilon_u=-1\}) \times \imGal\Bigl(\Bigl|\frac{m}{2u}\Bigr|^\infty\Bigr)\,.
$$
The set $\imGal(|u|^\infty)\cap \{\varepsilon_u= 1\}$ is the disjoint union of sets of the form $\prod_{\ell\mid u}(\imGal({\ell}^\infty)\cap \{\varepsilon_\ell=\pm 1\})$, choosing an even number of minus signs; for the set $\imGal(|u|^\infty)\cap \{\varepsilon_u= -1\}$ we have to choose an odd number of minus signs.
Since each $\ell\mid u$ is odd, the two sets $\imGal({\ell}^\infty)\cap \{\varepsilon_\ell=\pm 1\}$ can be investigated with the help of Lemma \ref{lemma-classification}.
Finally, the two sets $\imGal({2}^\infty)\cap \{\psi\cdot \varepsilon_z =\pm 1\}$ can be investigated using Lemmas \ref{classificatioN} and~\ref{lemma:two}.

\section{Examples}

\subsection{Example (non-surjective mod 3 representation)}\label{sub-exa1}
Consider the non-CM elliptic curve
$$
E: y^2 + y = x^3 + 6x + 27
$$
of discriminant $-3^{19}\cdot17$ and conductor $153=3^2\cdot17$ over $\mathbb{Q}$ \cite[\href{http://www.lmfdb.org/EllipticCurve/Q/153/b/2}{label 153.b2}]{lmfdb}.  The group $E(\Q)$ is infinite cyclic and is generated by the point
$$
\alpha=(5,13).
$$

The image of the $3$-adic representation is the inverse image of its reduction modulo $3$, the image of the mod $3$ representation is isomorphic to the symmetric group of order $6$, and the $3$-adic Kummer map is surjective \cite[Example 6.4]{LombardoPerucca}. The image of the mod $3$ representation has a unique subgroup of index~$2$, so the field $\Q(E[3])$ contains as its only quadratic subextension the cyclotomic field $\mathbb Q(\sqrt{-3})$.

The image of the $2$-adic representation is $\GL_2(\Z_2)$; see \cite{lmfdb}. By \cite[Theorem 5.2]{JonesRouse} the $2$-adic Kummer map is surjective: the assumptions of that result are satisfied because the prime $p=941$ splits completely in $E[4]$ but the point $(\alpha \bmod p)$ is not $2$-divisible over $\mathbb F_p$. Since the image of the mod~$2$ representation has a unique subgroup of index~$2$, the field $\Q(E[2])$ contains as its only quadratic subextension the field $\mathbb Q(\sqrt{-51})$ (the square-free part of the discriminant of $E$ is $-51$).

We have $\mathbb Q(E[2])\cap \mathbb Q(E[9]) = \Q$ because the residual degree modulo $22699$ of the extension $\mathbb Q(E[2], E[9])/\mathbb Q(E[9])$ is divisible by $3$ and the degree of this extension is even because $\Q(\sqrt{-51})$ is not contained in $\mathbb Q(E[3])$.
We deduce $\mathbb Q(E[2])\cap \mathbb Q(E[3^\infty]) = \mathbb Q$ by applying \cite[Theorem 12 (i)]{LombardoPerucca} (where $K=\Q(E[2])$).

Moreover, we have $\mathbb Q(E[3])\cap \mathbb Q(E[4]) = \Q$ because $\mathbb Q(\sqrt{-3})$ is not contained in $\mathbb Q(E[4])$: the prime $941$ is not congruent to $1$ modulo $3$ and splits completely in $\mathbb Q(E[4])$. By \cite[Theorem 12 (i)]{LombardoPerucca} we conclude that $\mathbb Q(E[3])\cap \mathbb Q(E[2^\infty]) = \mathbb Q$.

The $2$-adic Kummer extensions of $\alpha$ have maximal degree also over $\Q(E[3])$, in view of the maximality  of the $2$-Kummer extension, because the prime $4349$ splits completely in $\Q(2^{-2} \alpha)$ but not in $\Q(\sqrt{-3})$; see \cite[Theorem 12 (ii)]{LombardoPerucca} (where $K=\Q(\sqrt{-3})$).

The $3$-adic Kummer extensions of $\alpha$ have maximal degree also over $\Q(E[2])$ because the prime $217981$ splits completely in $\Q(3^{-2} \alpha)$ but $3$ divides the residual degree of $\Q(E[2])$; see \cite[Theorem 12 (ii)]{LombardoPerucca} (where $K=\Q(E[2])$).

We thus have $\imGal({6}^\infty)=\imGal({2}^\infty)\times \imGal({3}^\infty)$, the $2^\infty$ Kummer extensions are independent from $\Q(E[3])$, and the $3^\infty$ Kummer extensions are independent from $\Q(E[2])$. We are thus in the situation that the fields $\Q(2^{-\infty} \alpha)$ and $\Q(3^{-\infty} \alpha)$ are linearly disjoint over $\Q$. We deduce that the equality
$$\Dens_6(\alpha) = \Dens_2(\alpha) \cdot \Dens_3(\alpha)$$
holds for $\alpha$ and for its multiples.
The $2$-densities can be evaluated by \cite[Theorem 32]{LombardoPerucca}, for the $3$-densities see \cite[Example 6.4]{LombardoPerucca}. By testing the primes up to $10^6$, we have computed an approximation to $\Dens_6(\alpha)$ using \cite{sagemath}.

\begin{center}
\begin{tabular}{l|ll|r|l}
Point & $\Dens_2$ & $\Dens_3$ & $\Dens_6\hfil$ & primes $<10^6$\\
\hline
$\phantom1\alpha=(5, 13)$ & $11/21$ & $23/104$ & $253/2184=11.584\ldots\%$ &$11.624\%$\\
$2\alpha=(-1 , 4)$ & $16/21$ & $23/104$ & $46/273=16.849\ldots\%$ & $16.885\%$\\
$3\alpha=(-7/4,  -31/8)$ & $11/21$ & $77/104$ & $121/312=38.782\ldots\%$& $38.730\%$\\
$6\alpha=(137/16, 1669/64)$ & $16/21$ & $77/104$ & $22/39=56.410\ldots\%$& $56.373\%$\\
$4\alpha=(3, -9)$ & $37/42$ & $23/104$ & $851/4368=19.482 \ldots\%$ & $19.479\%$\\
$9\alpha=(\frac{19649}{12100}, -\frac{9216643}{1331000})$ & $11/21$ & $95/104$ & $1045/2184=47.847 \ldots\%$ & $47.791\%$\\
\end{tabular}
\end{center}

\subsection{The Serre curve $y^2+y=x^3+x^2$}\label{serre}

The elliptic curve
$$
E: y^2+y=x^3+x^2
$$
of discriminant $-43$ and conductor 43 over $\Q$ \cite[\href{http://www.lmfdb.org/EllipticCurve/Q/43/a/1}{label 43.a1}]{lmfdb} is a Serre curve \cite[Example 5.5.7]{Serre}. The group $E(\Q)$ is infinite cyclic and is generated by the point
$$
\alpha=(0,0).
$$
A computation shows that $\alpha$ is not divisible by~$2$ over the 4-torsion field of~$E$.  Therefore, by \cite[Theorem~5.2]{JonesRouse}, for every prime number $\ell$ and for every $n\geqslant 1$ the degree of the $\ell^n$-Kummer extension is maximal, i.e.
$$
[\Q_{\ell^{-n}\alpha}:\Q_{\ell^{-n}}]=\ell^{2n}.
$$

The $43$-adic Kummer extensions have maximal degree also over $\mathbb Q(E[2])$, i.e.
$$
[\Q_{43^{-n}\alpha}(E[2]):\Q_{43^{-n}}(E[2])]=43^{2n},
$$
because the degree $[\Q(E[2]):\Q]=6$ is coprime to $43$.

The extensions $\mathbb Q({2^{-1}\alpha})$ and $\mathbb Q(E[2\cdot 43])$ are linearly disjoint over $\mathbb Q(E[2])$, as can be seen by investigating the residual degree for the reduction modulo the prime $29327$, which splits completely in $\mathbb Q(E[2])$. Indeed, the residual degree of the extension $\mathbb Q({2^{-1}\alpha})$ equals $4$ while the residual degree of the extension $\mathbb Q(E[2\cdot 43])$ is odd because the prime is congruent to $1$ modulo $43$, and there are points of order $43$ in the reductions (the subgroup of the upper unitriangular matrices in $\GL_2(\mathbb Z/43\mathbb Z)$ has order $43$).

The $2$-adic Kummer extensions have maximal degree also over $\Q(E[43])$, i.e.
$$
[\Q_{2^{-n}\alpha}(E[43]):\Q_{2^{-n}}(E[43])]=2^{2n}.
$$
To see this, we consider the intersection $L$ of $\Q_{2^{-n}\alpha}$ and $\Q(E[43])$.  This is a Galois extension of~$\Q$, and the group $G=\Gal(L/\Q)$ is a quotient of both $(\Z/2^n\Z)^2\rtimes\GL_2(\Z/2^n\Z)$ and $\GL_2(\Z/43\Z)$.  Because $\SL_2(\Z/43\Z)$ has no non-trivial quotient that can be embedded into a quotient of $(\Z/2^n\Z)^2\rtimes\GL_2(\Z/2^n\Z)$, the quotient map $\GL_2(\Z/43\Z)\to G$ factors as
$$
\GL_2(\Z/43\Z)\stackrel{\det}\longrightarrow(\Z/43\Z)^\times\longrightarrow G
$$
This implies that $L$ is a subfield of $\Q(\zeta_{43})$.  Furthermore, $L$ contains $\Q(\sqrt{-43})$.  Using basic Galois theory, we see that the maximal subfield of $\Q(\zeta_{43})$ that can be embedded into $\Q_{2^{-n}\alpha}$ is $\Q(\sqrt{-43})$, and we conclude that $L$ equals $\Q(\sqrt{-43})$.

It follows that for $m=2\cdot 43$ we have the maximal degree $[\Q_{m^{-n}\alpha}:\Q_{m^{-n}}]=m^{2n}$ and, more generally, that for every multiple $P$ of $\alpha$ we have $[\Q_{m^{-n}P}:\Q_{m^{-n}}]=[\Q_{2^{-n}P}:\Q_{2^{-n}}]\cdot [\Q_{43^{-n}P}:\Q_{43^{-n}}]$. We may then apply \cite[Example 26]{LombardoPerucca} and various results in this paper to compute the following exact densities, and we use \cite{sagemath} to numerically verify them for the primes up to $10^6$:
\begin{center}
\begin{tabular}{l|r|l}
Point & $\Dens_{2\cdot 43}\hfil$ & primes $<10^6$\\
\hline
$\phantom1\alpha=(0,0)$ & $526206455/1028489616=51.163\ldots\%$ & $51.136\%$\\
$2\alpha=(-1 , -1)$ & $42521603/57138312=74.418\ldots\%$ & $74.397\%$\\
$4\alpha=(2,3)$ & $1769960107/2056979232=86.046\ldots\%$ & $86.072\%$\\
\end{tabular}
\end{center}

We conclude by sketching the computations for the point $\alpha$. The $43$-adic representation is surjective and the $43$-Kummer extensions have maximal degree. By parts (3) and (4) of Lemma \ref{lemma-classification},
we find that $\frac{1}{2\cdot 42}$ (respectively, $\frac{41}{2\cdot 42}$) is the counting measure in $\GL_2(\mathbb Z/43 \mathbb Z)$ of the matrices such that $\varepsilon_{43}=-1$ and that are in $(\mathcal M_{43}(0,b) \bmod \ell)$ for some $b>0$ (respectively, for $b=0$).
By multiplying this quantity by $43^{-b}\cdot 42$ we obtain by Proposition \ref{prop-uniform-lift} that $\mu_{\GL_2(\Z_{43})}(\mathcal M_{43}(0,b))=\frac{1}{2}43^{-b}$ for $b>0$.
By \cite[Example 26]{LombardoPerucca} the contribution to $\Dens_{43}$ coming from the matrices in $\imGal(43^\infty)$ such that $\varepsilon_{43}=-1$ is then
$$
\Dens_{43}(\varepsilon_{43}=-1)=\frac{41}{2\cdot 42}+\sum_{b>0} \frac{1}{2}\cdot 43^{-2b}=\frac{1805}{2\cdot 42\cdot 44}\,.
$$
From \cite[Theorem 32]{LombardoPerucca} we know that $\Dens_{43}(\alpha)=143510179/146927088$, and hence the contribution to $\Dens_{43}(\alpha)$ coming from the matrices in $\imGal(43^\infty)$ such that $\varepsilon_{43}=+1$ equals
$$
\Dens_{43}(\varepsilon_{43}=1)=\frac{3261637}{6678504}.
$$

Now we work with the $2$-adic representation, which is surjective, and restrict to counting the contribution to $\Dens_{2}(\alpha)$ coming from the matrices satisfying $\psi=-1$. In view of Lemma \ref{lemma:two} and Proposition \ref{prop-uniform-lift}, we find $\mu_{\GL_2(\Z_2)}(\mathcal M_2(0,b))=1/2 \cdot 2^{-b}$ for $b>0$.
By \cite[Example 26]{LombardoPerucca} the contribution to $\Dens_{2}(\alpha)$ coming from the matrices in $\imGal(2^\infty)$ such that $\psi=-1$ is therefore
\begin{equation}\label{conto}
\Dens_2(\psi=-1)=\sum_{b>0} {1}/{2}\cdot 2^{-2b} = 1/6.
\end{equation}
From \cite[Theorem 32]{LombardoPerucca} we know that $\Dens_{2}(\alpha)=11/21$, and hence the contribution to $\Dens_{2}$ coming from the matrices in $\imGal(2^\infty)$ such that $\psi=1$ is
$$
\Dens_2(\psi=1)=5/14.
$$
Finally by the partition in Section \ref{partitioning} we can compute the requested density as the combination of the above quantities:
$$
\Dens_{2\cdot 43}(\alpha) = 2\bigl( \Dens_{2}({\psi=1})\cdot\Dens_{43}({\varepsilon_{43}=1})+\Dens_{2}({\psi=-1})\cdot\Dens_{43}({\varepsilon_{43}=-1})\bigr).
$$
The factor $2$ accounts for the fact that the integration domain (equipped with its normalised measure) is a subgroup of index $2$ of $\GL_2(\mathbb Z_2)\times \GL_2(\mathbb Z_{43})$, while we compute $\Dens_{2}(\alpha)$ and $\Dens_{43}(\alpha)$ in $\GL_2(\mathbb Z_2)$ and $\GL_2(\mathbb Z_{43})$, respectively.

For the point $2\alpha$, by \cite[Example 26]{LombardoPerucca} we only need to scale \eqref{conto} by a factor $2$, giving $1/3$ and $3/7$ as the two contributions to $\Dens_{2}(2\alpha)$ by \cite[Theorem 32]{LombardoPerucca}. For the point $4\alpha$, we adapt \eqref{conto} as $2\cdot {1}/{2}\cdot 2^{-2}+\sum_{b>1} 4\cdot {1}/{2}\cdot 2^{-2b}$ and obtain $5/12$ and $13/28$ as the two contributions to $\Dens_{2}(4\alpha)$.

\bibliographystyle{abbrv}
\bibliography{biblio}
\end{document}